\tikzset{commutative diagrams/.cd,arrow style=tikz,diagrams={>=stealth'}}
\tikzset{->-/.style={decoration={
  markings,
  mark=at position #1 with {\arrow{>}}},postaction={decorate}}}
 \tikzset{-<-/.style={decoration={
  markings,
  mark=at position #1 with {\arrow{<}}},postaction={decorate}}}
 \tikzset{>=stealth}
\newcommand\blfootnote[1]{%
  \begingroup
  \renewcommand\thefootnote{}\footnote{#1}%
  \addtocounter{footnote}{-1}%
  \endgroup
}
\newcommand{\mr}{\mathring}
\newcommand{\Z}{\mathbb{Z}}
\DeclareMathOperator{\closure}{cl}
\DeclareMathOperator{\cone}{cone}
\DeclareMathOperator{\intr}{int}
\renewcommand{\phi}{\varphi}
\numberwithin{equation}{section}
\newtheorem{theorem}[equation]{Theorem}
\newtheorem{lemma}[equation]{Lemma}
\newtheorem{corollary}[equation]{Corollary}
\newtheorem*{mainthm}{Theorem \ref{THBS from cusp condition}}
\newtheorem*{maincor}{Corollary \ref{main corollary}}
\theoremstyle{definition}
\newtheorem{question}[equation]{Question}
\theoremstyle{remark}
\begin{document}

\title {Taut branched surfaces from veering triangulations}
\author {Michael Landry} 
\date{}

\maketitle

\begin{abstract}
Let $M$ be a closed hyperbolic 3-manifold with a fibered face $\sigma$ of the unit ball of the Thurston norm on $H_2(M)$. If $M$ satisfies a certain condition related to Agol's veering triangulations, we construct a taut branched surface in $M$ spanning $\sigma$. This partially answers a 1986 question of Oertel, and extends an earlier partial answer due to Mosher.
\end{abstract}

\blfootnote{This material is based upon work supported by the National Science Foundation Graduate Research Fellowship Program under Grant No. DGE-1122492, and by the National Science Foundation under Grant No. DMS-1610827 (PI Yair Minsky). Any opinions, findings, and conclusions or recommendations expressed in this material are those of the author and do not necessarily reflect the views of the National Science Foundation. }

\section{Introduction}
Let $M$ be a closed, irreducible, atoroidal 3-manifold. In \cite{Oe}, Oertel shows that each closed face $\sigma$ of the Thurston norm ball of $M$ possesses a finite collection of taut oriented branched surfaces which together carry representatives of all integral classes in $\cone(\sigma)$. We say that these branched surfaces \textbf{span} $\sigma$. One question he asked and left open was whether this collection could have size 1. That is, when is it possible to find a single taut oriented branched surface which spans $\sigma$? Our main result is a partial answer to this question when $M$ is hyperbolic and $\sigma$ is fibered.

\begin{mainthm}[Main Theorem]
If every boundary torus of $\mathring{M}$ witnesses at most two ladderpole vertex classes of $H_2(M)$, then $M$ has a taut homology branched surface spanning $\sigma$.
\end{mainthm}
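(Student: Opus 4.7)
My plan is to construct the desired branched surface $B \subset M$ by first building a taut homology branched surface $B^\circ$ in $\mr{M}$ from the veering triangulation, and then extending $B^\circ$ across the Dehn filling solid tori used to recover $M$ from $\mr{M}$.

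First I would let $\tau$ denote the veering triangulation of $\mr{M}$ associated to $\sigma$ and extract from $\tau$ an oriented branched surface $B^\circ \subset \mr{M}$ whose carried surfaces realize exactly the integral classes in $\cone(\sigma)$ viewed as a face of the Thurston norm ball of $\mr{M}$. Earlier parts of the paper should establish that $B^\circ$ is a taut homology branched surface spanning the corresponding face of $\mr{M}$. I would then study $B^\circ \cap \partial \mr{M}$: on each boundary torus $T_i$, the veering structure induces a ladder-structured triangulation whose ladderpole edges form closed curves of one or two \emph{ladderpole slopes}, and $\partial_i B^\circ := B^\circ \cap T_i$ is a train track whose weight polytope has vertices represented by curves of exactly these slopes. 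The correspondence between carried surfaces and homology classes translates the ladderpole vertex classes of $\sigma$ witnessed by $T_i$ into ladderpole slopes on $T_i$, so the hypothesis becomes: each $T_i$ carries at most two ladderpole slopes.

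Next I would extend $B^\circ$ across each Dehn filling solid torus $V_i$ that caps off $T_i$, by attaching disks and/or annuli to $\partial_i B^\circ$ in a manner compatible with its coorientation. When $T_i$ carries $0$ or $1$ ladderpole slopes, the extension is essentially forced: curves of non-ladderpole slope on $T_i$ bound meridian disks in $V_i$ up to isotopy, while curves of the single ladderpole slope, if any, can be capped by annuli of parallel meridians. The genuinely new case is when $T_i$ carries two ladderpole slopes; here I would exploit the fact that the filling slope $\mu_i$ coincides with at most one of them, and combine annular extensions for ladderpole curves with disk extensions for other carried curves, still compatibly with coorientation.

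Finally I would verify that the resulting $B \subset M$ is a taut homology branched surface spanning $\sigma$. Spanning $\sigma$ follows by checking that every integral class in $\cone(\sigma)$ lifts through the drill-fill correspondence to a carried surface of $B^\circ$, which in turn produces a carried surface of $B$. Tautness requires ruling out disks and spheres of contact, monogons, and Reeb components. The main obstacle, I expect, is the last point in the delicate case of two ladderpole slopes on a single torus: the extension across $V_i$ then involves distinct annular and disk pieces glued along $T_i$, and ensuring these do not cobound a disk of contact or half-disk of contact, either inside $V_i$ or when paired with sectors of $B^\circ$ in $\mr{M}$, will require a careful combinatorial analysis of the ladders in the cusp triangulation of $\tau$ together with the structure of the branched surface $B^\circ$ near $T_i$.
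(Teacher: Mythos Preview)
Your proposal rests on a misreading of the key definition. On each boundary torus $T_i$ there is exactly \emph{one} ladderpole slope $s_i$: the ladderpole edges of the veering cusp triangulation assemble into parallel curves of that single slope (Lemmas~\ref{bands} and~\ref{traintrackfacts}). A vertex class $v$ is called \emph{ladderpole at $T_i$} when the boundary on $T_i$ of a $B_{\mr\tau}$-carried representative of $\mr v$ consists entirely of curves of this slope (equivalently, $i(v,\theta_i)=0$ for the singular orbit $\theta_i$ filling $T_i$). So the hypothesis is not ``each $T_i$ carries at most two ladderpole slopes''---that number is always one---but rather ``for each $T_i$, at most two of the vertex classes $v_1,\dots,v_n$ of $\sigma$ have zero intersection with $\theta_i$.'' This misreading propagates into the strategy. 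The paper does not extend the full branched surface $B_{\mr\tau}$ across the filling; its boundary $\gamma_i$ is a genuine train track on $T_i$, not a curve system, and cannot simply be capped by disks or annuli. Instead one takes, for each vertex class $v_\ell$, a surface $\mr S_\ell$ carried by $B_{\mr\tau}$, caps \emph{that surface's} boundary curves on each $T_i$ by meridional disks $D_{i,\ell}$ (when $\partial\mr S_\ell\cap T_i$ is meridional) or by embedded annuli $A_{i,\ell}$ (when $\mr S_\ell$ is ladderpole at $T_i$), and only then forms the branched sum $B_\sigma$ of the closed surfaces $S_1,\dots,S_n$. The hypothesis enters exactly here: it guarantees at most two annulus families $A_{i,j},A_{i,k}$ per solid torus $U_i$, and two such families can be arranged to cut $U_i$ into subtori with no ``sources'' or ``sinks,'' which is precisely what allows one to push a positive transversal out of $U_i$ and conclude $B_\sigma$ is a homology branched surface. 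With three annulus families this source/sink elimination fails.

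Tautness is then not checked via disks of contact, monogons, or Reeb components. Rather, Lemma~\ref{norm and filling} shows each $S_\ell$ is norm-minimizing, so $B_\sigma$ fully carries the norm-minimizer $\sum_\ell S_\ell$; Lemma~\ref{tautness from carrying} gives almost-tautness, and since $B_\sigma$ is a homology branched surface in a hyperbolic manifold, Lemma~\ref{almost taut to taut} upgrades this to tautness.
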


The definition of $\mr M$, and that of \emph{ladderpole vertex class}, which involves Agol's veering triangulation, can be found in Sections \ref{background} and \ref{boundary train track stuff} respectively. 

To quickly summarize the previous study of Oertel's question: in \cite{SB}, it is shown by a counterexample that the answer is ``not always." However, in \cite{Mo} Mosher proves that in the case when $M$ is hyperbolic and $\sigma$ is fibered, there exists such a branched surface if the vertices of $\sigma$ have positive intersection with the singular orbits of the Fried suspension flow $\phi$ corresponding to $\sigma$.

The main theorem \ref{THBS from cusp condition} strengthens Mosher's result because ladderpole classes in particular have intersection 0 with some singular orbits of the Fried flow $\phi$. While the condition in the statement of the main theorem may seem mysterious, we prove in Theorem \ref{small betti} that it holds, in particular, when $H_2(M)$ has rank at most 3. This gives us the following corollary.

\begin{maincor}
If $b_2(M)\le 3$, then $M$ has a taut homology branched surface spanning $\sigma$.
\end{maincor}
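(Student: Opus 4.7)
The plan is to read this corollary as an immediate combination of the two results already advertised in the introduction. Namely, by Theorem \ref{small betti}, the hypothesis $b_2(M)\le 3$ implies that every boundary torus of $\mr M$ witnesses at most two ladderpole vertex classes of $H_2(M)$. This is precisely the cusp condition appearing in the statement of Theorem \ref{THBS from cusp condition}. So the Main Theorem applies directly to the fibered face $\sigma$ and produces a taut homology branched surface in $M$ spanning $\sigma$.

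Thus, at the level of this corollary, there is essentially no work beyond chaining the two theorems: first invoke Theorem \ref{small betti} to upgrade the assumption $b_2(M)\le 3$ into the required bound on ladderpole vertex classes per boundary torus, then invoke the Main Theorem with this bound in hand. The hyperbolicity of $M$ and the fiberedness of $\sigma$ are inherited from the standing setup of the Main Theorem, so no extra hypothesis needs to be checked.

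The real obstacle is not in the corollary itself but in the proof of Theorem \ref{small betti}, where the specific numerical coincidence between ``rank $\le 3$'' and ``at most two ladderpole classes per boundary torus'' must be established. I would expect that argument to proceed by analyzing the combinatorics of the veering triangulation at each cusp: the ladderpole classes are controlled by distinguished slopes on each boundary torus, and a rank bound on $H_2(M)$ should translate into linear dependences among these slopes that limit how many independent ladderpole classes can coexist over a single cusp. But that is content reserved for Theorem \ref{small betti}; for the corollary itself no independent argument is required.
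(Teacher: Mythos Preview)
Your proposal is correct and matches the paper exactly: the corollary is stated immediately after Theorem~\ref{small betti} with no separate proof, since it is the direct concatenation of Theorem~\ref{small betti} and Theorem~\ref{THBS from cusp condition}. Your aside about how Theorem~\ref{small betti} might go is off---the paper's argument is not about slopes and linear dependences but rather the observation that sums of classes ladderpole at $T_i$ are again ladderpole at $T_i$, forcing all such vertex classes into a single facet of $\sigma$, which has dimension $\le 1$ when $b_2(M)\le 3$---but as you say, that is not the content of the corollary.
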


The technique we employ to improve Mosher's result uses Agol's veering triangulation of a pseudo-Anosov mapping torus $M'$ which is missing the singular orbits of Fried's flow $\phi$. One nice property of this veering triangulation is that its 2-skeleton has the structure of a taut oriented branched surface which spans a fibered face $\sigma'$ of $M'$. 

We work in the cusped manifold $M'$ and make use of the veering triangulation before moving back to $M$ by Dehn filling. By understanding how the veering triangulation sits in $M$, we can construct a face-spanning taut oriented branched surface in $M$ as long as our  condition regarding ladderpole vertex classes is met.

Veering triangulations were introduced in \cite{Ag}, where they were used to provide an alternative proof of the theorem in \cite{FLM} stating that the mapping tori of small-dilatation pseudo-Anosovs come from Dehn filling on finitely many cusped hyperbolic manifolds. 
In \cite{HRST} it is shown that veering triangulations admit positive angle structures, and in \cite{FG} the authors give a lower bound on the smallest angle in such a triangulation in terms of combinatorial data coming from the veering triangulation. In light of these results, one could hope that veering triangulations might be realized geometrically. However, in \cite{HIS} there is an explicit example of a non-geometric veering triangulation.
Gu\'eritaud proved that the veering triangulation controls the combinatorics of the Cannon-Thurston map associated to the fully punctured manifold $M'$ in \cite{Guer}. Most recently, Minsky and Taylor found connections between subsurface projections and the veering triangulation in \cite{MinTay}.

\textbf{Acknowledgements.} I would like to thank the anonymous referee for her or his helpful comments and suggestions. Thanks are also due to Siddhi Krishna, who identified an expositional error in an earlier version of the paper. Finally, I would like to thank my advisor Yair Minsky for many helpful conversations during this research.

\section{Background}\label{background}

\subsection{Notation} \label{notation}

Throughout this document, certain notations will remain fixed. All homology groups are assumed to have coefficients in $\mathbb{R}$ unless otherwise stated.
We consider a closed pseudo-Anosov mapping torus $M$, and a closed face $\sigma$ of the unit ball of the Thurston norm $x$ on $H_2(M)$. There is a pseudo-Anosov flow $\phi$ on $M$ which Fried showed in \cite{Fr79} is associated to $\sigma$ in a natural way. We denote the union of the singular orbits of $\phi$ by $c$ and define $\mr M:=M\setminus U$ where $U$ is an open tubular neighborhood of $c$. The boundary of $\mr M$, $\partial \mr M$, is a union of tori. There is a natural embedding $P\colon H_2(M)\to H_2(\mr M,\partial \mr M)$ 
which comes from the exact sequence $0=H_2(U)\to H_2(M)\to H_2(M,U)\to\cdots$ and the excision isomorphism $H_2(\mr M,\partial \mr M)\cong H_2(M,U)$. This is induced at the level of chains by sending a chain $S$ to $S\setminus U$. We use the notation $P(\alpha)=\mr \alpha$. More exposition of some of these ideas will be provided in the following subsections.

\subsection{Thurston norm} \label{Thurston norm section}

We begin by reviewing the definition of, and some salient facts about, the Thurston norm. For a more detailed treatment, see Thurston's original paper \cite{Th1} or Candel and Conlon's textbook \cite{CC}.

Let $Z$ be an orientable three-manifold. The Thurston norm is a seminorm on $H_2(Z,\partial Z)$ defined as follows. If $ S $ is a connected surface embedded in $ Z $, define $\chi _- (S): = \max \{ 0,-\chi  (S)\} $, where $\chi  $ denotes Euler characteristic. If $ S $ has multiple connected components, then $\chi _- (S) :=\sum^{}_{i }\chi _-(S_i) $ where the sum is over the components $S_i$ of $ S $.

If $\alpha\in H_2(Z,\partial Z)$ is an integral class, we can always find an embedded surface representing $ \alpha $.  We define 
\[
x (\alpha ) = \min _{[S]=\alpha}\chi_-(S),
\]
where the minimum is taken over embedded representatives of $\alpha  $, and call $x(\alpha)$ the \textbf{Thurston norm} of $\alpha  $. 

The Thurston norm is homogeneous and satisfies the triangle inequality for integral classes. It can be extended to all rational classes in $H_2(Z,\partial Z)$ by homogeneity, whereafter it extends uniquely to a seminorm on all of $H_2(Z,\partial Z)$. If $Z$ is incompressible, atoroidal, anannular, and boundary-incompressible (i.e. has no essential surfaces of nonnegative Euler characteristic), then $x$ is a bona fide norm on $H_2(Z,\partial Z)$, and not just a seminorm. In this paper we will deal only with pseudo-Anosov mapping tori, so we will assume $x$ is a norm from this point forward.

From the fact that $ x $ takes integer values on the integral lattice, one can show that the unit sphere of $x$ is a finite-sided convex polyhedron. Moreover, this polyhedron encodes information about how $Z$ fibers over $ S ^ 1 $. If $F$ is a fiber of some fiber bundle $ F\rightarrow Z\rightarrow  S ^ 1 $, then $ [F] $ lies in the interior of the positive cone over a top-dimensional face. If $\alpha$ is any integral class in the interior of the same cone, then $\alpha$ is represented by a fiber of some fiber bundle over the circle. The closed top-dimensional face associated to this cone is called a \textbf{fibered face}.

In the setting of this paper, a closed hyperbolic manifold $M$ with fibered face $\sigma$,  David Fried proved in \cite{Fr79} that we can associate to $\sigma $ a flow $\varphi $ with very nice properties.
Namely, any primitive integral class $\alpha\in \intr \cone(\sigma) $, where $\intr$ denotes interior, is represented by a cross-section $S$ to $\varphi $, and the first return map of $S$ is pseudo-Anosov. Hence we can think of $\phi$ as the simultaneous suspension flow of all monodromies of fibers corresponding to $\sigma$, and the singular orbits of $\phi$ as the suspensions of the singular points of those monodromies.

\subsection{Taut branched surfaces}\label{branched surfaces section}

A \textbf{branched surface} $B$ is a smooth codimension-1 object in a 3-manifold, analogous to a train track in a surface, which organizes the data of various embedded surfaces. Locally, a branched surface looks like a stack of disks   $D_1, \dots, D_n$ such that $ D_i $ is glued to $ D_{i +1} $ for  $ i< n $ along the closure of one component of the complement of a smooth arc through $D_i$. The union of the images of smooth curves is called the \textbf{branching locus}. The smooth structure of $B$ is such that the inclusion of each $ D_i $ is smooth (\cite{Oe88}). The \textbf{sectors} of $ B $ are the connected components of the complement of the branching locus; these are analogous to the edges of a train track.

A regular neighborhood $N(B)$ of a branched surface $B$ can be foliated by line segments transverse to $B$. This foliation is called the \textbf{vertical foliation} of $ N (B) $. If it is possible to consistently orient the leaves of the vertical foliation, then $ B $ is called an \textbf{oriented branched surface}.

We say that $ B $ \textbf{carries} $ S $ if $ S $ is embedded in $ N (B) $ transverse to the vertical foliation; this is analogous to a train track carrying a curve. In the same way that a train track inherits nonnegative integer edge weights from a carried curve, a surface $S$ carried by $ B $ assigns a weight to each sector of $ B $. If these weights are all positive, we say $B$ \textbf{fully carries} $S$. 

Conversely, a collection of nonnegative integral weights on sectors of $B$ which satisfy the linear equations determined by the branching determines an isotopy class of surface carried by $B$. In fact, any real weights satisfying the branching equations naturally determine a homology class; we say that the homology classes corresponding to nonnegative weights  are \textbf{carried by} $B$.

If we allow negative weights, there is a natural vector space whose elements are collections of real weights satisfying the branching equations. In this vector space, the integral points in the cone of non-negative weights correspond to surfaces carried by $B$.

Branched surfaces are interesting in part because of the surfaces they carry, so it is natural to distinguish types of branched surfaces which carry surfaces with nice properties.
If a branched surface carries only surfaces realizing the minimal $\chi_-$ in their homology class, we say it is \textbf{almost taut}. Following Oertel, we say $B$ is \textbf{taut} if it carries only \emph{incompressible} surfaces which attain the minimal $\chi_-$ in their homology class. We say $B$ is a \textbf{homology branched surface} if $B$ is oriented and for each point $p \in B $, there exists a closed oriented transversal through $ p $, i.e. an oriented loop through $p$ whose intersection with $N(B)$ consists of leaves of the vertical foliation with the correct orientation. Since any surface carried by a homology branched surface has nonzero algebraic intersection number with a closed curve, homology branched surfaces carry only homologically nontrivial surfaces.

The following lemma, which we will use later, is probably known to many. We record a proof here for convenience.

\begin{lemma}[]\label{closed cone}
Let $Z$ be a compact 3-manifold, and $B\subset Z$ a homology branched surface. Then the cone of classes carried by $B$ surface is closed.
\end{lemma}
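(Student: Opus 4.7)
The plan is to realize the cone of carried classes as the linear image of a closed polyhedral cone in Euclidean space, and then apply a standard convex-geometric fact that only requires the image map to be ``kernel-free'' on the cone --- a condition that will be guaranteed precisely by the homology branched surface hypothesis.

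First, I would set up the linear algebra. Let $n$ be the number of sectors of $B$, let $W(B) \subset \mathbb{R}^n$ be the linear subspace cut out by the branching equations, and let $W_+(B) = W(B) \cap \mathbb{R}^n_{\ge 0}$. This is a closed polyhedral cone, being the intersection of a linear subspace with an orthant. The discussion preceding the lemma furnishes a natural linear map $\Phi \colon W(B) \to H_2(Z,\partial Z)$ sending each weight vector to its associated homology class, and the cone of classes carried by $B$ is by definition $\Phi(W_+(B))$.

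Second, I would invoke the following elementary fact: if $C \subset \mathbb{R}^n$ is a closed convex cone and $L \colon \mathbb{R}^n \to \mathbb{R}^m$ is linear with $\ker L \cap C = \{0\}$, then $L(C)$ is closed. The proof is a standard rescaling argument. Given a sequence $L(c_k) \to y$ with $c_k \in C$, either $\|c_k\|$ is bounded --- in which case one extracts a subsequential limit in $C$, whose image is $y$ --- or $\|c_k\| \to \infty$, in which case the unit vectors $c_k/\|c_k\|$ subconverge to a nonzero element of $C \cap \ker L$, contradicting the hypothesis.

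Third, and crucially, I would verify $\ker \Phi \cap W_+(B) = \{0\}$ by exploiting the closed oriented transversals supplied by the definition of homology branched surface. Suppose $w \in W_+(B)$ is nonzero; choose a sector $s$ with $w_s > 0$, a point $p$ in the interior of $s$, and a closed oriented transversal $\gamma$ through $p$. Because the vertical foliation of $N(B)$ is consistently oriented and $\gamma$ is oriented compatibly with it, every arc of $\gamma \cap N(B)$ contributes with the same positive sign to the algebraic intersection pairing, so
\[
\gamma \cdot \Phi(w) \;=\; \sum_{s'} n_{s'}(\gamma)\, w_{s'} \;\geq\; w_s \;>\; 0,
\]
where $n_{s'}(\gamma) \geq 0$ counts the crossings of $\gamma$ through sector $s'$. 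Thus $\Phi(w) \neq 0$, and the convex-geometric fact above delivers the lemma. The main technical point is this third step: one has to be a little careful to extract the uniform positivity of the crossing contributions from the definition, and to check that the intersection pairing makes sense for a real-weighted class represented by $\Phi(w)$. Once that is in hand, everything else is routine convex geometry.
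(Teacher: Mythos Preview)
Your proof is correct and follows essentially the same approach as the paper's: both identify the carried cone as the linear image of the nonnegative weight cone, use the homology branched surface hypothesis to ensure that no nonzero nonnegative weight maps to zero, and then apply the standard convex-geometric fact that the linear image of such a cone is closed. Your third step, spelling out the intersection-pairing argument for why $\ker \Phi \cap W_+(B) = \{0\}$, is more detailed than the paper's one-line appeal to the definition, but the underlying idea is identical.
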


\begin{proof}
Consider the natural linear map $L$ from the vector space $ V $ of weights satisfying the branching equations on $B$ to $H_2(Z,\partial Z)$. Let $A\subset V$ be the closed cone of nonnegative weights on $B$; our goal is to show that $L(A)$ is closed.

Choose a norm $|| \cdot ||$ on $V$, and let $A^1=\{v\in A\mid ||v||=1\}$. Then $A^1$ is compact, and hence $L(A^1)$ is compact. Moreover, $0\notin L(A^1)$ because $B$ is a homology branched surface.

Note that $L(A)=\cone(L(A^1))$. Since the cone over a compact set not containing $0$ is closed, we are done.
\end{proof}

Note that an almost taut branched surface is not necessarily taut. For example, if $ B $ carries the boundary of a solid torus, $B$ is not taut. However, since this torus realizes the Thurston norm of the homology class 0, carrying the torus does not preclude $ B $ from being almost taut. 

If an almost taut homology branched surface $B$ lies in a pseudo-Anosov mapping torus, it is taut. In general, we have the following.

\begin{lemma}\label{almost taut to taut}
Let $N$ be a manifold such that the Thurston norm $x$ is a norm (i.e. not just a seminorm) on $H_2(N,\partial N)$, and let $B\subset N$ be an almost taut branched surface. If $B$ is also a homology branched surface, then $B$ is taut.
\end{lemma}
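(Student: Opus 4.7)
The plan is to take an arbitrary surface $S$ carried by $B$ and show it is incompressible; since $B$ is almost taut we already have $\chi_-(S)=x([S])$, so incompressibility will complete the proof of tautness. I would first observe that each connected component $S_i$ of $S$ is itself carried by $B$: the nonnegative sector-weight vector representing $S$ splits as a sum of nonnegative weight vectors $w_i$ for the components, each individually satisfying the branching equations. Because $B$ is a homology branched surface, every nonempty $S_i$ satisfies $[S_i]\neq 0$ in $H_2(N,\partial N)$, since a closed oriented transversal through a sector on which $w_i$ is positive has nonzero algebraic intersection with $S_i$.

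The key intermediate step is to rule out components of $S$ with $\chi(S_i)\ge 0$. An orientable component with $\chi\ge 0$ is a sphere, disk, annulus, or torus, each of which has $\chi_-=0$. Almost tautness would then give $x([S_i])\le\chi_-(S_i)=0$, and the hypothesis that $x$ is a genuine norm (not just a seminorm) would force $[S_i]=0$, contradicting the previous paragraph. Hence every component of $S$ satisfies $\chi(S_i)\le -1$.

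Finally I would suppose for contradiction that $S$ admits an essential compressing disk $D$ on some component $S_i$, and compress along $D$ to produce a surface $S'$ with $[S']=[S]$ and $\chi(S')=\chi(S)+2$. A sphere component of $S'$ could only arise by capping off a disk subregion of $S_i\setminus\partial D$, which would contradict essentiality of $D$. The standard Thurston-norm bookkeeping then yields $\chi_-(S')<\chi_-(S)$, contradicting $\chi_-(S)=x([S])=x([S'])$. Therefore $S$ is incompressible, and $B$ is taut. The real content is concentrated in the second paragraph, where the homology-branched and norm hypotheses jointly eliminate low-complexity components of carried surfaces; the compression reduction is a routine invocation of Thurston's original norm-reducing argument.
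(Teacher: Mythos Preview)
Your proof is correct and follows essentially the same route as the paper's: both arguments hinge on the observation that a compressible surface carried by $B$ would, via the homology-branched-surface and genuine-norm hypotheses, be forced to have a component with $\chi_-=0$ and hence trivial homology class, a contradiction. The paper runs the steps in the reverse order (compress first, then deduce $\chi(S)\ge 0$, then derive the homological contradiction) and is terser, but the content is the same; your treatment of components and the explicit elimination of $\chi\ge 0$ pieces before compressing is if anything a bit cleaner.
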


\begin{proof}
Suppose $S$ is a compressible surface carried by $B$, and let $S'$ be the surface obtained by compressing $S$ along a compression disk. Since $B$ is almost taut, $\chi_-(S)=\chi_-(S')$, and since $\chi(S')=\chi(S)+2$, $S$ must be a torus, annulus, disk, or sphere. Since $B$ is a homology branched surface, $S$ is homologically nontrivial, a contradiction. 
\end{proof}

In the course of proving Theorem 4 in \cite{Oe}, Oertel proves the following useful criterion for almost tautness. Since he doesn't state the result explicitly, we record a proof here for the reader's convenience.

\begin{lemma}[\cite{Oe}]\label{tautness from carrying}
Let $N$ be as above. Suppose $ B\subset M$ is an oriented branched surface which fully carries a minimal-$\chi_-$ representative $\Sigma $ of a single class in $H _2(N,\partial  N) $. If $B$ does not carry any spheres or disks, then $ B $ is almost  taut.
\end{lemma}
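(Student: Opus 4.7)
The plan is to exploit the full-carrying hypothesis on $\Sigma$ to ``borrow weight'' when comparing $\chi_-$ of an arbitrary carried surface against its Thurston norm, together with the fact that Euler characteristic extends to a linear functional on the weight space of $B$. Once that linearity is in hand, the argument reduces to a short triangle-inequality calculation.

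First I would establish the linearity. Let $V$ be the real vector space of weights satisfying the branching equations on $B$, and let $L\colon V \to H_2(N,\partial N)$ be the natural linear map. Computing $\chi$ of any integer-weight carried surface as a sum of sector contributions (each proportional to its weight), corrected linearly along the branching locus, one sees that $\chi$ is the restriction of a linear functional $f\colon V\to\mathbb{R}$ to the integer points of the cone of nonnegative weights. Because $B$ is assumed to carry no sphere or disk, every component of a carried surface has nonpositive Euler characteristic, so $\chi_-(S) = -\chi(S) = -f(w_S)$ for every carried surface $S$ with weight vector $w_S$.

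Next, since $\Sigma$ is fully carried, its weight vector $w_\Sigma$ is strictly positive on every sector. For any integer-weight carried surface $S$ with weight vector $w_S$, I would choose a positive integer $n$ large enough that $nw_\Sigma - w_S$ is still nonnegative; this defines an integer-weight carried surface $T$ with $[T] = n[\Sigma] - [S]$, and linearity gives $\chi_-(T) = n\chi_-(\Sigma) - \chi_-(S)$. Combining $x([S]) \le \chi_-(S)$, $x([T]) \le \chi_-(T)$, and the hypothesis $x([\Sigma]) = \chi_-(\Sigma)$, the Thurston-norm triangle inequality yields
\[
n\,x([\Sigma]) = x\bigl([T]+[S]\bigr) \le x([T]) + x([S]) \le \chi_-(T) + \chi_-(S) = n\chi_-(\Sigma) = n\,x([\Sigma]).
\]
Every inequality in this chain must therefore be an equality, and in particular $x([S]) = \chi_-(S)$, which is exactly almost tautness.

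The main obstacle I expect is the first step: pinning down precisely why $\chi$ is linear in the weights, including the combinatorial accounting at branch arcs and at triple points of the branching locus. Once that structural fact is in hand, the remaining steps are a one-line calculation, and the extension from integer to rational and real weights follows by homogeneity of $x$ together with the closed-cone fact of Lemma \ref{closed cone}.
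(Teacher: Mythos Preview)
Your argument is correct and is essentially the same as the paper's: both take $n$ large enough that $n\Sigma - S$ is still carried, use additivity of $\chi_-$ under oriented sum (your ``linearity of $\chi$ on weights,'' which the paper invokes simply as $\chi_-(n\Sigma)=\chi_-(F)+\chi_-(S)$ given no carried disks or spheres), and then apply the triangle inequality for $x$. The only cosmetic differences are that the paper phrases it as a contradiction while you run the inequalities directly, and that the additivity you worry about is the standard fact $\chi(S+T)=\chi(S)+\chi(T)$ for oriented cut-and-paste, so no elaborate sector-by-sector accounting is needed.
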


\begin{proof}
Suppose for a contradiction that $ B $ carries a surface $ S $ which is homologous to $ S' $ with $\chi _-(S) >\chi _-(S') $. Since $\Sigma $ is fully carried, there exists $ n\in \mathbb{N}$ such that $n\Sigma $ (i.e. $ n $ parallel copies of $\Sigma  $) is carried with greater weights than $ S $ on each sector of $ B $. We have 
\[
n\Sigma = (n\Sigma -S) + S,
\]
where $n\Sigma-S$ denotes the surface arising from subtracting the weights corresponding to $S$ from the weights corresponding to $n\Sigma$, and $+$ denotes oriented sum in a regular neighborhood of $B$. Let $ F := n\Sigma - S$. 
Now
\begin{align*}
\chi _- (n\Sigma ) & =\chi _- (F) +\chi _- (S)&\text{(because $B$ does not carry disks or spheres)}\\
& >\chi _- (F) +\chi _- (S')&\\
&\ge x([F])+x([S'])&\\
&\ge x([n\Sigma]).&
\end{align*}
The last inequality is the triangle inequality for $x$. This is a contradiction since $n\Sigma$ realizes the minimal $\chi_-$ in $[n\Sigma]$.
\end{proof}

\subsection{Branched surfaces and faces of the Thurston norm ball}

If two homologically nontrivial surfaces $S$ and $T$ are carried by a taut oriented branched surface $B$, we can perform an oriented cut-and-paste along the branching locus of $B$ to form a surface $S+T$ representing $[S]+[T]$ and carried by $B$. Since $B$ is taut, $S+T$ is norm-minimizing. Also, $\chi_-(S+T)=\chi_-(S)+\chi_-(T)$, as none of $S$, $T$, or $S+T$ has any sphere or disk components. This is because they are all carried by a taut branched surface. Thus
\[
x([S]+[T])=x([S+T])=\chi_-(S+T)=\chi_-(S)+\chi_-(T)=x([S])+x([T]).
\] 

The faces of the Thurston norm unit ball are projectivizations of the maximal cones on which $x$ is linear, so we conclude that $[S]$ and $[T]$ lie in $\cone(\sigma)$ for some face $\sigma$ of the Thurston norm ball. Oertel observed this and asked the following question.

\begin{question}[\cite{Oe}]\label{Oertel's question}
Let $M$ be a simple (compact, irreducible, atoroidal) 3-manifold. For each face of the unit ball of the Thurston norm on $ H_2 (M,\partial  M) $, is it possible to find a taut oriented branched surface which carries a norm-minimizing representative of every projective homology class in $\cone(\sigma)$? 
\end{question}

In \cite{SB}, a closed 3-manifold is constructed for which the answer to Question \ref{Oertel's question} is No. More specifically, Sterba-Boatwright produces a face of this manifold's Thurston norm ball with the following property: any branched surface carrying norm-minimizing representatives of all classes in that face also carries a compressible torus, so it cannot be taut. Hence the answer to Oertel's question is not an unqualified Yes.
However, in \cite{Mo}, Question \ref{Oertel's question} is answered in the affirmative for a fibered face of a closed pseudo-Anosov mapping torus in the case that each integral class in the cone over the face has positive intersection number with the singular orbits of the suspension flow for that face. 

In this paper we extend Mosher's result by using the relatively new technology of veering triangulations, which we describe now.

\subsection{Veering Triangulations}
A \textbf{taut ideal tetrahedron} is an ideal tetrahedron with the following extra data: each edge is labelled 0 or $\pi  $ so that the sum of the labels of edges incident to any ideal vertex is $\pi$, each face is co\"oriented so that two faces point out and two faces point in, and face co\"orientations change only along edges labeled 0. Note that the word ``taut" is used here in a different sense than when it modifies ``branched surface." A \textbf{taut ideal triangulation} of a 3-manifold is an ideal triangulation by taut tetrahedra such that face co\"orientations agree, and for each edge $ e $, the sum of $ e $'s labels over all incident tetrahedra is $ 2\pi  $. The 2-skeleton of such a triangulation $\triangle $ can be pinched along each edge to give a branched surface $B_\triangle $ in the manifold, as seen in Figure \ref{lookingatT}. In this way we think of the edge labels of a taut tetrahedron as angles. Taut ideal triangulations were introduced in \cite{La}.

The condition that makes a triangulation veering is simple to draw but more complicated to define.

Consider a taut ideal triangulation of an oriented 3-manifold $L$. Up to combinatorial equivalence there is only one taut ideal tetrahedron. However, if we distinguish a 0-edge there are 2 types embedded in $L$ up to oriented equivalence, as you can see in Figure \ref{taut_types}. The taut ideal triangulation is \textbf{veering} if for each edge $e$, 
(i) if we circle around $ e $ and read off edge angles, no two $\pi$'s are circularly adjacent, and 
(ii) each tetrahedron for which $ e $ is a 0-edge is of the same type when $ e $ is distinguished.
This situation is shown in Figure \ref{edge link}.
If $L$ is not orientable, a taut ideal triangulation of $L$ is veering if its lift to the orientation cover of $L$ is veering.

There is an alternative definition of veering, due to Gueritaud in \cite{Guer}, which says that a taut triangulation is veering if its 1-skeleton possesses a certain type of two-coloring. That definition is equivalent to the one given above, which we use throughout the paper.

\begin{figure}
\centering
\begin{tikzpicture}[scale=3]
\tikzstyle myBG=[line width=6pt,opacity=1.0]
\draw[line width=1pt](0,1)--(1,0);
\draw[white, myBG] (0,0)--(1,1);
\draw [line width=1pt](0,0)--(1,1);
\draw [line width=1pt](0,1)--(1,1)--(1,0)--(0,0);
\draw [line width=3.5pt] (0,0)--(0,1);
\foreach \x in {0,1}
\foreach \y in {0,1}
\node at (\x,\y) [circle, fill=white, scale=0.55]{};
\foreach \x in {0,1}
\foreach \y in {0,1}
\draw [line width=1pt](\x,\y) circle (.38mm);
\end{tikzpicture}
\quad\quad
\begin{tikzpicture}[scale=3]
\tikzstyle myBG=[line width=6pt,opacity=1.0]
\draw[line width=1pt](0,0)--(1,1);
\draw[white, myBG] (0,1)--(1,0);
\draw [line width=1pt](0,1)--(1,0);
\draw [line width=1pt](0,1)--(1,1)--(1,0)--(0,0);
\draw [line width=3.5pt] (0,0)--(0,1);
\foreach \x in {0,1}
\foreach \y in {0,1}
\node at (\x,\y) [circle, fill=white, scale=0.55]{};
\foreach \x in {0,1}
\foreach \y in {0,1}
\draw [line width=1pt](\x,\y) circle (.38mm);
\end{tikzpicture}
\caption{The two types of taut tetrahedron with distinguished (bold) 0-edge. In this drawing the co\"orientation is pointing out of the paper, the vertical and horizontal edges are 0-edges, and the diagonal edges are $\pi$-edges.}
\label{taut_types}
\end{figure}
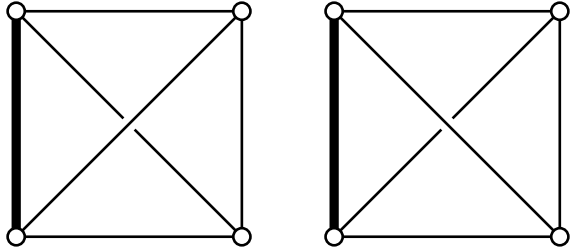

\begin{figure}
\centering
\begin{tikzpicture}[scale=2.7]
\tikzstyle myBG=[line width=5pt,opacity=1.0]
\draw [line width=1pt] (.15,1.4)--(-1.25,1.3);
\draw [line width=1pt] (.15,1.4)--(-1.3,.4);
\draw [line width=1pt] (.15,1.4)--(-.7,-.4);
\draw [line width=1pt] (0,0)--(1.55,.3);
\draw [line width=1pt] (0,0)--(1.4,1.6);
\draw [white, myBG] (0,0)--(-.8,1.8);
\draw [white, myBG] (0,0)--(-1.25,1.3);
\draw [white, myBG] (0,0)--(-1.3,.4);
\draw [white, myBG] (.15,1.4)--(1.1,-.35);
\draw [white, myBG] (.15,1.4)--(1.55,.3);
\draw[line width=1pt] (0,0)--(1.1,-.35)--(1.55,.3)--(1.4,1.6)--(.15,1.4)
--(-.8,1.8)--(-1.25,1.3)--(-1.3,.4)--(-.7,-.4)--(0,0);
\draw [line width=1pt] (0,0)--(-.8,1.8);
\draw [line width=1pt] (0,0)--(-1.25,1.3);
\draw [line width=1pt] (0,0)--(-1.3,.4);
\draw [line width=1pt] (.15,1.4)--(1.1,-.35);
\draw [line width=1pt] (.15,1.4)--(1.55,.3);
\draw[line width=3pt] (0,0)--(.15,1.4);
\node at (0,0) [circle, fill=white, scale=0.55]{};
\node at (1.1,-.35) [circle, fill=white, scale=0.55]{};
\node at (1.55,.3) [circle, fill=white, scale=0.55]{};
\node at (1.4,1.6) [circle, fill=white, scale=0.55]{};
\node at (.15,1.4) [circle, fill=white, scale=0.55]{};
\node at (-.8,1.8) [circle, fill=white, scale=0.55]{};
\node at (-1.25,1.3) [circle, fill=white, scale=0.55]{};
\node at (-1.3,.4) [circle, fill=white, scale=0.55]{};
\node at (-.7,-.4) [circle, fill=white, scale=0.55]{};

\draw [line width=1pt](0,0) circle (.38mm);
\draw [line width=1pt](1.1,-.35) circle (.38mm);
\draw [line width=1pt](1.55,.3) circle (.38mm);
\draw [line width=1pt](1.4,1.6) circle (.38mm);
\draw [line width=1pt](.15,1.4) circle (.38mm);
\draw [line width=1pt](-.8,1.8) circle (.38mm);
\draw [line width=1pt](-1.25,1.3) circle (.38mm);
\draw [line width=1pt](-1.3,.4) circle (.38mm);
\draw [line width=1pt](-.7,-.4) circle (.38mm);
\end{tikzpicture}

\caption{The bold edge satisfies the veering condition.}
\label{edge link}
\end{figure}
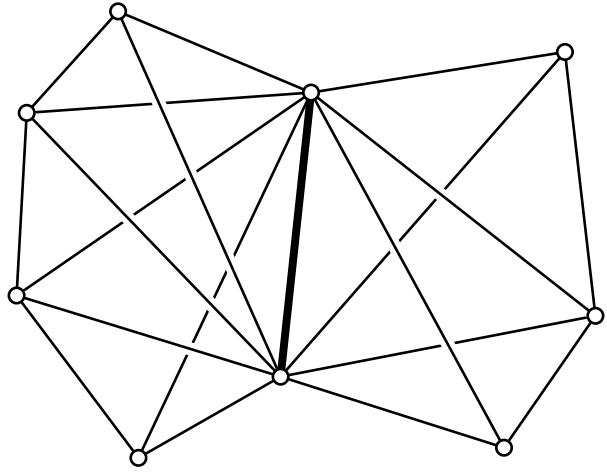

Recall that $c$ denotes the union of singular orbits of the Fried suspension flow in our pseudo-Anosov mapping torus $M$. In \cite{Ag}, Ian Agol shows how to construct a canonical  veering triangulation of $ M\setminus c$. He builds this triangulation using a sequence of ideal triangulations of the punctured surface which are dual to a periodic train track splitting sequence. The triangulations are related by Whitehead moves which determine the incidencies of taut tetrahedra. Taut ideal triangulations obtained from Whitehead moves in this way, a construction due to Lackenby in \cite{La}, are called \textbf{layered triangulations}.

Agol has proven the following theorem, which shows that his veering triangulation of $ M\setminus c$ is canonically associated to a fibered face of $M$. Gu\'eritaud provided an alternative proof, which is exposited in \cite{MinTay}, based on his alternative construction of the canonical veering triangulation in \cite{Guer}.

\begin{theorem}[Agol]\label{Agol's theorem}
The ideal layered veering triangulation $\tau$ of $ M\setminus c$ coming from a fibration associated to $\intr \cone(\sigma)$ is constant over $\intr \cone(\sigma)$. The 2-skeleton of this triangulation is a branched surface $B_\tau$ such that if $S$ is a fiber with $[S]\in \intr\cone(\sigma)$, some multiple of $ S\setminus c$ is fully carried by $B_\tau$.
\end{theorem}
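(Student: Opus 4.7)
The plan is to separate the statement into two claims: (i) the veering triangulation $\tau$ of $\mr M = M\setminus c$ depends only on the face $\sigma$, not on the specific fiber chosen from $\intr\cone(\sigma)$; and (ii) the 2-skeleton $B_\tau$ is an oriented branched surface that fully carries a positive multiple of $\mr S$ for every fiber $S$ with $[S]\in\intr\cone(\sigma)$.

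For claim (i), I would begin by recalling Agol's construction for a fixed primitive integral $\alpha=[S]\in\intr\cone(\sigma)$ with pseudo-Anosov monodromy $\psi$: take the invariant train track $\ell$ of $\psi$ on $\mr S$, form its periodic maximal splitting sequence, dualize to get a sequence of triangulations of $\mr S$ related by Whitehead moves, and stack the associated taut tetrahedra to build a layered triangulation of the mapping torus $\mr M$. The veering condition is forced by the choice of \emph{maximal} splits. To show the resulting triangulation is independent of $\alpha\in\intr\cone(\sigma)$, I would appeal to Gu\'eritaud's reinterpretation in \cite{Guer}: tetrahedra of $\tau$ correspond bijectively to maximal rectangles in the orbit space of the lifted flow $\tilde\phi$, with corners determined by the lifts of the stable and unstable singular laminations. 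Because every class in $\intr\cone(\sigma)$ is realized by a cross-section to the \emph{same} flow $\phi$ (Fried's theorem), the orbit space and its two laminations are intrinsic to $\sigma$, so the combinatorial data of $\tau$ is as well.

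For claim (ii), I would first verify that pinching the 2-skeleton along each edge (as in Figure \ref{lookingatT}) produces a branched surface: in each tetrahedron the two pairs of coherently cooriented faces meet along the two $0$-edges, and the veering/taut hypothesis ensures these local pinchings assemble consistently. The coorientation on faces gives a consistent orientation on the vertical foliation of $N(B_\tau)$, so $B_\tau$ is oriented. To see that a multiple of $\mr S$ is fully carried, I would use the layered structure directly: after pushing $\mr S$ slightly, it appears as a union of triangulated cross-sections, one sweeping through each tetrahedron during one period of the splitting sequence. This shows $\mr S$ is transverse to the vertical foliation of $N(B_\tau)$ and assigns positive weight to every sector crossed during a full period; since the sequence is periodic and $\mr M$ is its quotient, every sector is crossed, and a multiple of $\mr S$ has strictly positive weights everywhere.

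The main obstacle is claim (i): showing that two genuinely different fibrations yield the \emph{same} triangulation, not merely that two fibers of one fibration do. Without Gu\'eritaud's flow-intrinsic picture, one has to compare two a priori unrelated periodic splitting sequences on two different surfaces, and match Whitehead moves up to a global shift of the layering. I would therefore invest the bulk of the work in setting up the orbit-space model of $\tilde\phi$, identifying leaves of the stable and unstable laminations, and proving the bijection between maximal rectangles and taut tetrahedra; once that is in place both parts of the theorem follow from features that are manifestly functions of $\phi$ alone. Everything else, including the branched-surface and carrying verifications, is local and essentially formal.
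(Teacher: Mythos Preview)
The paper does not contain a proof of this statement: it is quoted as a theorem of Agol, with the remark that Gu\'eritaud gave an alternative proof exposited in \cite{MinTay}. There is thus no in-paper argument to compare your proposal against.

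As a summary of what is in the cited references, your sketch is accurate. You correctly identify that Agol's splitting-sequence construction is \emph{a priori} fiber-dependent and that the cleanest route to invariance over $\intr\cone(\sigma)$ is Gu\'eritaud's flow-intrinsic model (tetrahedra $\leftrightarrow$ maximal rectangles in the orbit space of $\tilde\phi$), which depends only on $\phi$ and hence, by Fried, only on $\sigma$. One caveat on claim (ii): the layered description you invoke to show ``every sector is crossed'' is literally available only for the fiber used to build the layering; for a different fiber $S$ with $[S]\in\intr\cone(\sigma)$ you again need the flow-intrinsic picture (or the fact that any cross-section to $\phi$ meets every tetrahedron of $\tau$) to conclude full carrying.
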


We remark that $B_\tau$ is taut. Indeed, $B_\tau$ is transverse to $\phi$, the flow associated to $\sigma$, and a generic orbit of $\phi$ is dense in $M$; this allows us to find a closed transversal through every point of $B_\tau$, so $B_\tau$ is a homology branched surface. Since $M\setminus c$ is irreducible and boundary irreducible, this implies $B_\tau$ carries no disks or spheres. 
Indeed, any carried disk or sphere would be homologically nontrivial because $B_\tau$ is a homology branched surface, but this is ruled out by irreducibility and boundary irreducibility of $M\setminus c$.

The fact that $B_\tau$ fully carries a fiber (in fact, many fibers) of $M\setminus c$ means that $B_\tau$ is  almost taut by Lemma \ref{tautness from carrying} since fibers are norm-minimizing, see \cite{Th1}. By Lemma \ref{almost taut to taut}, $B_\tau$ is taut.

Since we want to use the veering triangulation $\tau$ to extract information about the Dehn filling $M$ of $M\setminus c$, it will be useful to consider the restriction of $\tau$ to $\mr M=M\setminus U$ (recall that $U$ is a small tubular neighborhood of the collection $c$ of singular orbits of $\phi$). That way we will have room to work in the solid tori of $U$. Homologically this changes nothing, and we will use the notation $\mr\tau$ and $B_{\mr\tau}$ to denote $\tau\setminus U$ and $B_\tau\setminus U$ respectively. A taut ideal triangulation of $\mr M$ will mean a taut ideal triangulation of $M\setminus c$ restricted to $\mr M$.

\subsection{On the boundary of $ \mathring M $}

\begin{figure}
\centering
\includegraphics[height=3in]{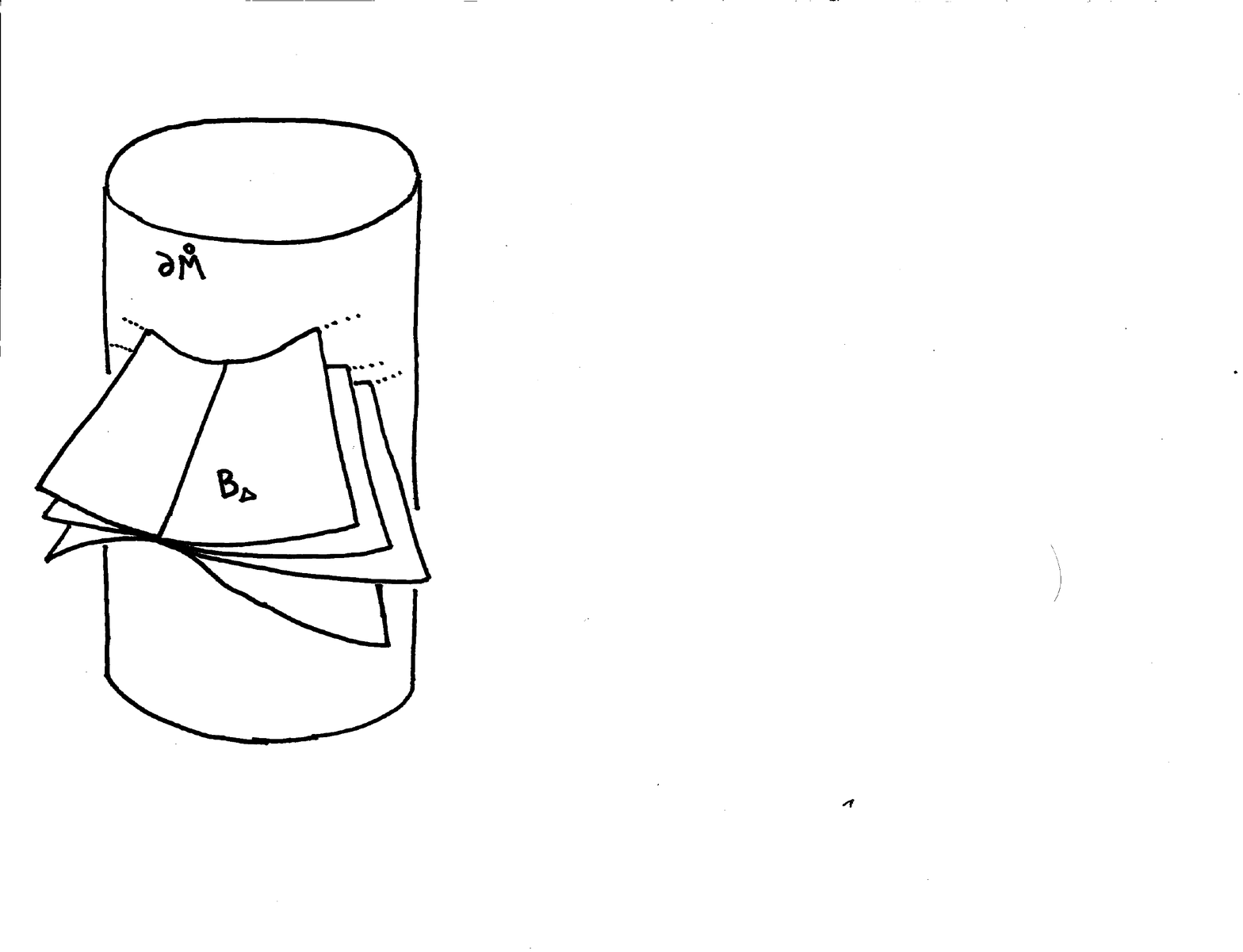}
\caption{}
\label{lookingatT}
\end{figure}

Let $\triangle $ be a taut ideal triangulation of $\mathring{ M} $. Then we may assume its 2-skeleton $ B_\triangle $ intersects $\partial  \mathring M $ transversely in a co\"oriented train track as in Figure \ref{lookingatT}. This train track divides each component of $\partial \mr M$ into bigons, which we will think of as triangles with one vertex whose interior angle is $\pi$  (corresponding to a smooth path through a switch), and two cuspidal vertices whose angles are 0.

A triangle as above, with two vertices of interior angle 0 and one with angle $ \pi$, is called \textbf{flat}. The edge between the two 0-vertices is called a \textbf{0-0 edge} and the other two are called \textbf{0-$\boldsymbol\pi$ edges}. We say the triangle is \textbf{upward} or \textbf{downward} if the train track's co\"orientation points out of or into a flat triangle at its $\pi  $-vertex,  respectively. See Figure \ref{upward downward}.

\begin{figure}
\centering

\begin{tikzpicture}[scale=1.4]
\draw [line width=2pt](0,0) .. controls (.5,-.05) and (.5,.5) .. (1,.5)
.. controls (1.5,.5) and (1.5,-.05) .. (2,0);
\draw [line width=2pt](0,0) .. controls (.5,-.1) and (1.5,-.1) .. (2,0);
\node at (0,0)[circle, fill=black][scale=0.4]{};
\node at (1,.5)[circle, fill=black][scale=0.4]{};
\node at (2,0)[circle, fill=black][scale=0.4]{};

\draw [line width=2pt](3,.5) .. controls (3.5,.55) and (3.5,0) .. (4,0)
.. controls (4.5,0) and (4.5,.55) .. (5,0.5);
\draw [line width=2pt](3,.5) .. controls (3.5, .6) and (4.5, .6) .. (5,.5);
\node at (3,0.5)[circle, fill=black][scale=0.4]{};
\node at (4,0)[circle, fill=black][scale=0.4]{};
\node at (5,.5)[circle, fill=black][scale=0.4]{};

\end{tikzpicture}

\caption{On the left and right, respectively, are upward and downward flat triangles. The co\"orientation of all edges points upward.}
\label{upward downward}
\end{figure}
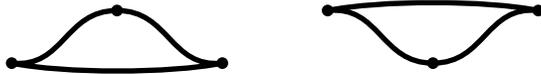

We draw the train track with the convention that we view $\partial \mr M$ from inside $\mr M$ and the co\"orientation points upward. Having made this convention we define the \textbf{triangles to the left} and \textbf{to the right} of a switch $p$ to be the triangles with interior angle 0 at $ p $ which lie to the left and right of $ p $.

\begin{lemma}[Veering condition on boundary]\label{veering on boundary}
Let $ \triangle$ be a taut ideal triangulation of $\mathring M$. Then $\triangle  $ is veering if and only if the train track on $\partial  \mathring M $ has the property that for every switch $ p $ the triangles to the left of $ p $ are all upward or all downward, and the triangles to the right of $ p $ are all the opposite.
\end{lemma}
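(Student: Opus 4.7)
The plan is to reduce the lemma to a local statement at a single vertex of the boundary triangulation and then match the two types of taut tetrahedron to the orientations of the adjacent boundary triangles.

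First I would fix a vertex $v$ of the boundary triangulation on $\partial\mr M$; it is the endpoint of some edge $e$ of $\triangle$. Let $T_1,\dots,T_n$ be the tetrahedra arranged cyclically around $e$, and let $\ell_i\in\{0,\pi\}$ be the label of $e$ in $T_i$, so that the boundary triangle $\tau_i$ of $T_i$ at $v$ has interior angle $\ell_i$ there. Since $\triangle$ is taut, $\sum \ell_i = 2\pi$, so exactly two of the $\ell_i$ equal $\pi$, and the partial sums $\ell_1+\cdots+\ell_{i-1}$ all lie in $\{0,\pi\}\pmod{2\pi}$. I would use this to argue that every edge of the boundary triangulation at $v$ is tangent to a common line, the two $\pi$-vertex triangles pass smoothly through $v$, and each 0-vertex triangle forms a cuspidal wedge opening either to the left or to the right of $v$, the side being determined by its cyclic position relative to the two $\pi$'s.

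The crucial step is to relate the type of a 0-labeled tetrahedron (with $e$ distinguished, as in Figure \ref{taut_types}) to the upward/downward orientation of its boundary triangle at $v$. By examining each of the two types while tracking which $\pi$-edge lies on the co-oriented side of the tetrahedron, I expect to show that a Type 1 tetrahedron contributes an upward triangle on one designated side of $v$ and a downward triangle on the other, while a Type 2 tetrahedron contributes the opposite pairing. With this correspondence in hand, the veering condition (ii) -- that all 0-labeled tetrahedra around $e$ are of the same type when $e$ is distinguished -- becomes equivalent to the assertion that, at each switch at an endpoint of $e$, all triangles on one side are upward and all on the other are downward.

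To address the first veering condition (i), I would argue that a violation forces two adjacent $\pi$-vertex triangles to share an edge at $v$, and that the resulting arrangement of boundary arcs lines up the neighboring switches in a way that forces opposite orientations to appear on the same side at some switch, contradicting the boundary condition. Combining this with the correspondence above yields both directions of the equivalence. The main technical obstacle is the explicit verification of the type-to-orientation correspondence -- a careful bookkeeping exercise involving the face co-orientations of the two tetrahedron types -- which is cleanest to carry out by direct inspection of Figure \ref{taut_types}.
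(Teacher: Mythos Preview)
Your approach is essentially the paper's: identify each switch with an edge $e$ and verify that the type of a tetrahedron with $e$ as distinguished $0$-edge determines the (side, orientation) of its boundary cusp-triangle; the paper does this in two sentences, asserting the correspondence as ``clear,'' while you spell out the same correspondence more carefully. You go beyond the paper in separately treating veering condition~(i), which the paper's terse proof does not explicitly mention; your sketch for that part---propagating a contradiction from adjacent $\pi$-labels to a neighboring switch---is the least developed step and should be made precise (in particular, when the two $\pi$'s are adjacent all $0$-cusps at $p$ lie on a single side, so you should check whether the contradiction already appears at $p$ or genuinely requires passing to another edge).
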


\begin{proof}
Each switch $p$ in the train track corresponds to an edge $e$ in the triangulation, and it is clear that with respect to a single component of $\partial \mathring M$, the intersections with tetrahedra to the right of the switch are flat triangles of the same type if and only if the corresponding tetrahedra are of the same type when $e$ is distinguished. The triangles to the left of $p$ will be of the opposite type if and only if their corresponding tetrahedra are of the same type as those to the right of $p$.
\end{proof}

\begin{figure}
\centering
\includegraphics[height=1.8in]{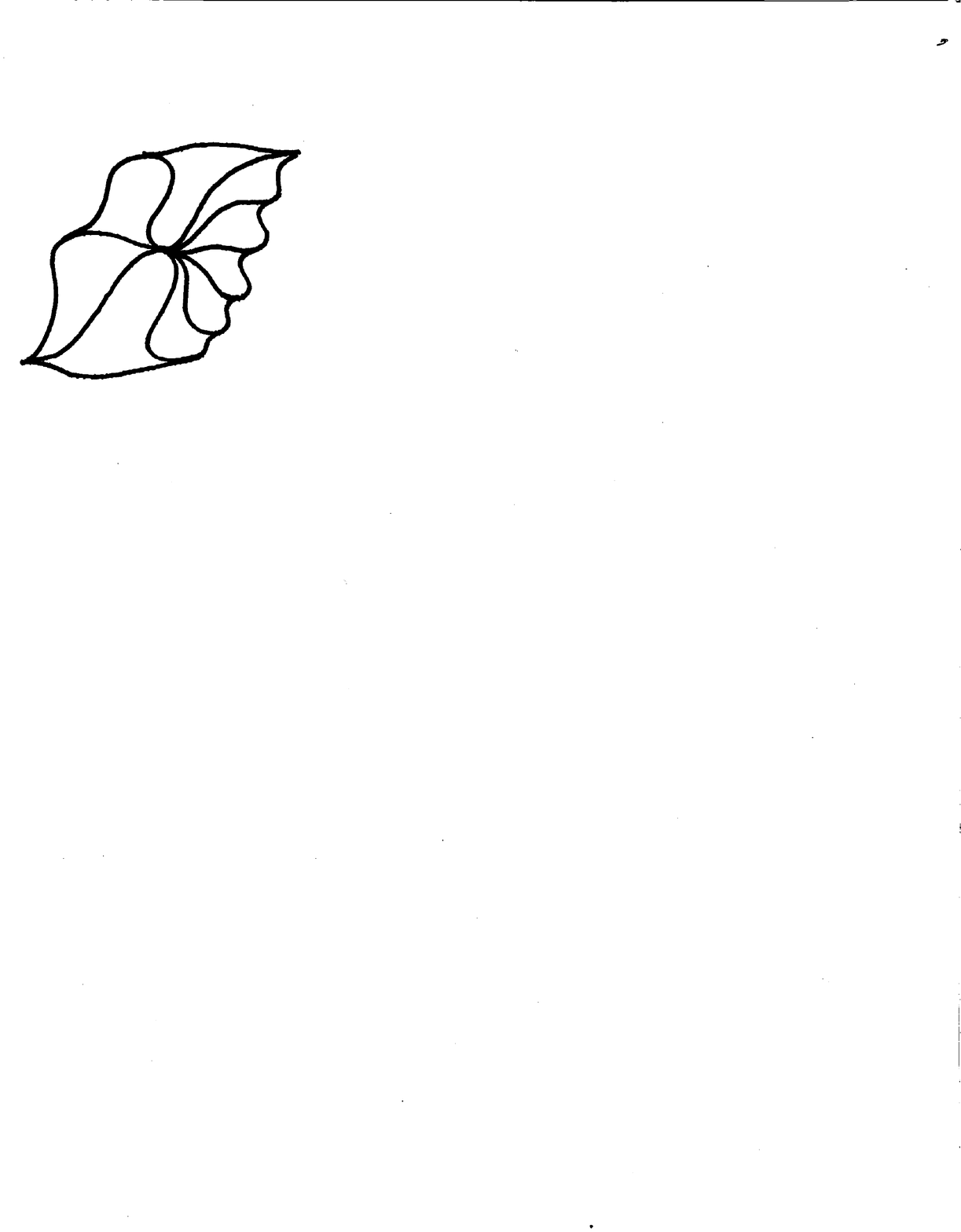}
\caption{The edge of the taut ideal triangulation corresponding to the central switch satisfies the veering condition.}
\label{veeringonboundary}
\end{figure}

An example of a switch satisfying the condition of Lemma \ref{veering on boundary} is shown in Figure \ref{veeringonboundary}.
In \cite{FG}, Futer and Gu\'eritaud observed the following structure in the intersection of a veering triangulation with $\partial \mathring M$.

\begin{lemma}\label{bands}
Fix a component $ T_i $ of $\partial \mr M$. Let $ u , d \subset T_i$ be the closed regions consisting of upward and downward triangles, respectively. Then $u$ and $d$ are collections of essential annuli.
If $t$ is a triangle, then each of $t$'s 3 vertices lie in $\partial  u =\partial  d$. 
\end{lemma}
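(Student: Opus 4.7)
The plan is to first identify $\partial u = \partial d$ explicitly as a 1-submanifold of $T_i$ by a vertex-by-vertex analysis, and then prove its components are essential by an Euler-characteristic count. At each vertex $v$ of the triangulation on $T_i$, the angle-sum condition forces exactly two incident triangles to carry the $\pi$-label at $v$. The veering condition (no two $\pi$'s cyclically adjacent) makes the cyclic sequence of triangles at $v$ read $\pi,0^a,\pi,0^b$ with $a, b \geq 1$, and by Lemma \ref{veering on boundary} the two fans of 0-triangles have opposite types. Using the global co\"orientation of the train track, an upward triangle lies on the co\"orientation side of its 0-0 edge and on the opposite side of each of its 0-$\pi$ edges (downward is the mirror). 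A case analysis across a shared edge then yields: (i) an edge that is 0-0 from both adjacent triangles' perspectives cannot occur, since co\"orientation would force opposite types across it while the two triangles would sit consecutively in the same fan at each endpoint, forcing the same type by Lemma \ref{veering on boundary}; (ii) ``mixed'' edges (0-0 from one side, 0-$\pi$ from the other) separate same-type triangles; (iii) edges that are 0-$\pi$ from both sides (necessarily with the two $\pi$-ends at different endpoints of the edge) separate opposite types, and thus constitute $\partial u$. Counting at each $v$ shows exactly two such edges meet $v$ --- one associated to each of the two $\pi$-triangles there --- so $\partial u$ is a disjoint union of simple closed curves meeting every vertex of the triangulation.

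The main obstacle is essentialness. Suppose for contradiction that a component $\gamma \subset \partial u$ bounds a disk $D \subset u$ (the case $D \subset d$ is symmetric). An interior vertex of $D$ would carry both a fan of upward and a fan of downward triangles, contradicting $D \subset u$; so $D$ has all vertices on $\gamma$, and Euler's formula for a disk with $f$ triangles gives $f+2$ boundary vertices and $f-1$ interior edges. For any $T \subset D$, the 0-0 edge of $T$ cannot lie on $\gamma$: edges of $\gamma \subset \partial u$ are of type (iii), 0-$\pi$ from both sides, while $T$ sees its own 0-0 edge as 0-0. So every triangle's 0-0 edge is interior to $D$. Each interior edge, being of type (ii) since both its adjacent triangles in $D$ are upward, is the 0-0 edge of exactly one of its two adjacent triangles; this yields a bijection between the $f$ triangles of $D$ and the $f-1$ interior edges, forcing $f = f-1$, a contradiction.

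With essentialness in hand, the components of $\partial u$ are disjoint essential simple closed curves on a torus, hence mutually parallel. They cut $T_i$ into parallel annular regions alternately contained in $u$ and $d$, realizing both as disjoint unions of essential annuli. The final statement --- each triangle has all three of its vertices in $\partial u = \partial d$ --- follows because every vertex of the triangulation lies on $\partial u$, having degree 2 there.
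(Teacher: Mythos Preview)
Your proof is correct and supplies what the paper omits: the paper's own argument is the single sentence ``The reader can check that a violation of the lemma contradicts Lemma~\ref{veering on boundary},'' so all the work is left to the reader. Your vertex-by-vertex analysis establishing that at every switch the upward triangles and downward triangles each occupy a single contiguous arc in the cyclic order (so that $\partial u$ is a $1$-manifold through every vertex) is precisely the sort of check the paper has in mind, and the edge classification (i)--(iii) via the co\"orientation is a clean way to package it. The Euler-characteristic argument for essentialness --- pitting the $f$ triangles of a hypothetical disk against its $f-1$ interior edges through the map $T \mapsto (\text{0-0 edge of }T)$ --- is a genuinely nice addition that the paper does not indicate at all.

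One small point worth tightening: when you write ``suppose for contradiction that a component $\gamma \subset \partial u$ bounds a disk $D \subset u$,'' you are implicitly passing to an innermost such component. Without this reduction, $D$ might contain other components of $\partial u$ and hence triangles of both types, so that neither $D \subset u$ nor $D \subset d$ holds. It is a standard move, but since your subsequent argument (no interior vertices, all triangles upward) relies on it, you should make it explicit.
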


For example, if $ t $ is upward then each edge of $ t $ either lies entirely in $\partial  u =\partial  d $ or traverses $ u $ with its endpoints on the boundary.

\begin{proof}
The reader can check that a violation of the lemma contradicts Lemma \ref{veering on boundary}.
\end{proof}

We will call the annulus connected components of $ u $ and $ d $ \textbf{upward} and \textbf{downward bands}, respectively. Part of the content of Lemma \ref{bands} is that each band is only one edge across. Gu\'eritaud calls the edges forming the boundaries of bands \textbf{ladderpole edges}, and the edges which traverse the bands \textbf{rungs}. For an example of one of these triangulations lifted to $\mathbb R^2$, see Figure \ref{lifted train track}.

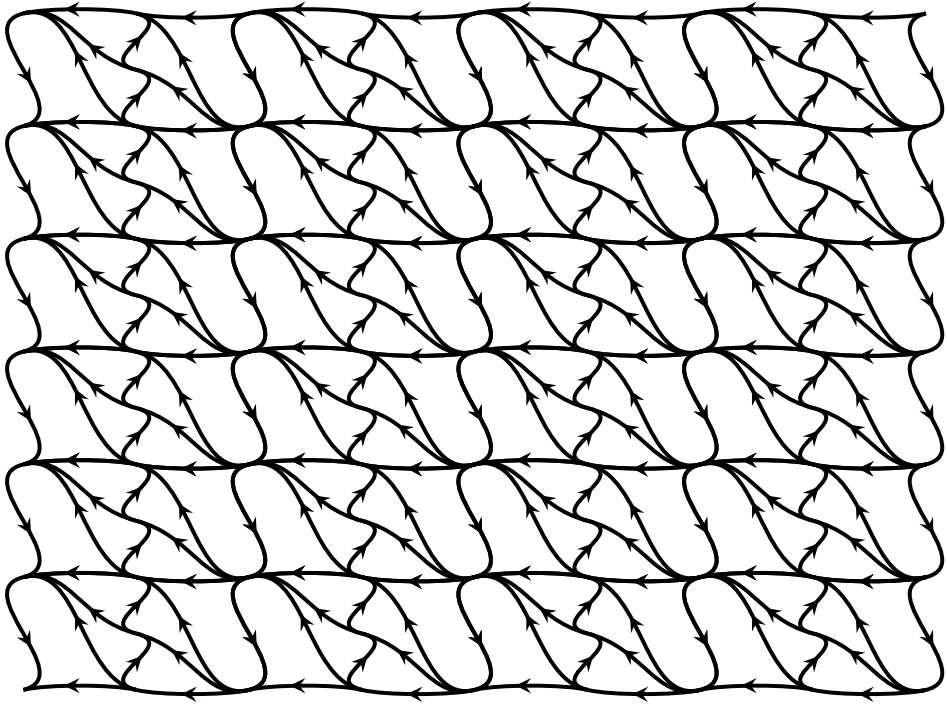
\begin{figure}
\centering

\begin{tikzpicture}[scale=.75]
\foreach \x in {0,4,8,12}
\foreach \y in {0,2,...,10}
{
\draw [-<-=.5, line width=1.5pt] (\x+0,\y+0) .. controls (\x+1,\y+.25) and (\x-1,\y+1.75) .. (\x+0,\y+2);
\draw [-<-=.5, line width=1.5pt](\x+0,\y+2) .. controls (\x+1,\y+2.25) and (\x+1,\y+.2) .. (\x+2,\y+0);
\draw [-<-=.7, line width=1.5pt](\x+0,\y+2) .. controls (\x+.8,\y+2.2) and (\x+1,\y+1.25) .. (\x+2,\y+1);
\draw [->-=.6, line width=1.5pt](\x+2,\y+0) .. controls (\x+1.2,\y+.2) and (\x+2.8,\y+.8) .. (\x+2,\y+1);
\draw [->-=.6, line width=1.5pt](\x+2,\y+1) .. controls (\x+1.2,\y+1.2) and (\x+2.8,\y+1.8) .. (\x+2,\y+2);
\draw [-<-=.5, line width=1.5pt](\x+0,\y+0) .. controls (\x+.4,\y+.1) and (\x+1.6,\y+.1) .. (\x+2,\y+0);
\draw [-<-=.5, line width=1.5pt](\x+0,\y+2) .. controls (\x+.4,\y+2.1) and (\x+1.6,\y+2.1) .. (\x+2,\y+2);
}
\foreach \x in {4,8,12,16}
\foreach \y in {2,4,...,12}
{
\draw [->-=.61,line width=1.5pt] (\x+0,\y+0) .. controls (\x-1,\y-.25) and (\x+1,\y-1.75) .. (\x+0,\y-2);
\draw [->-=.65, line width=1.5pt] (\x+0,\y-2) .. controls (\x-1,\y-2.25) and (\x-1,\y-.2) .. (\x-2,\y+0);
\draw [->-=.7, line width=1.5pt] (\x+0,\y-2) .. controls (\x-.8,\y-2.2) and (\x-1,\y-1.25) .. (\x-2,\y-1);
\draw [->-=.6, line width=1.5pt] (\x+0,\y+0) .. controls (\x-.4,\y-.1) and (\x-1.6,\y-.1) .. (\x-2,\y+0);
\draw [->-=.6, line width=1.5pt] (\x+0,\y-2) .. controls (\x-.4,\y-2.1) and (\x-1.6,\y-2.1) .. (\x-2,\y-2);
}
\node at (17,0)[circle,fill=white]{};
\end{tikzpicture}

\caption{An example of a train track coming from a veering triangulation, endowed with its inherited orientation, lifted to the plane. This particular example comes from the veering triangulation associated to a minimal dilation 4-strand braid, discussed in \cite{Ag}. This train track is associated to the cusp coming from the suspension of the monodromy's lone singular point.
}
\label{lifted train track}
\end{figure}

\section{Moving forward with Oertel's question}

\subsection{More on the structure of $\partial \mr\tau$}\label{boundary train track stuff}

Define $\gamma:=\partial \mr\tau$, i.e. the intersection of the veering triangulation with $\partial \mr M$. Fix a component $T_i$ of $\partial \mr M$, and set $\gamma_i=\gamma\cap T_i$. We say the slope $s_i$ corresponding to the union of all ladderpole edges in $\gamma_i$ is the \textbf{ladderpole slope} for $T_i$. It will be convenient to think of $\gamma_i$ as oriented in addition to being co\"oriented, meaning that each edge of $\gamma_i$ has a preferred direction, and the preferred directions are compatible at switches. Our choice of orientation is the one induced by the boundary of any surface carried by $B_{\mr \tau}$. In particular, the orientation on all rungs of $\gamma_i$ is from right to left (as indicated in Figure \ref{lifted train track}).
We will say $\gamma_i$ \textbf{positively carries} an oriented curve $a$ if the orientation of $a$ agrees with the orientations of the edges in $\gamma_i$.
Define an \textbf{upward} (resp. \textbf{downward})\textbf{ladderpole} to be the right (resp. left) boundary component of an upward band in $T_i$, endowed with the orientation it inherits from $\gamma_i$. In our pictures, the upward ladderpoles are oriented upwards.

\begin{lemma}\label{traintrackfacts}
Let $s_i$ and $T_i$ be as above. Then
\begin{enumerate}
\item $\gamma_i$ positively carries curves $s_i^+$ and $s_i^-$ with slope $s_i$ such that $[s_i^+]+[s_i^-]=0$ in $H_1(T_i)$, and $s_i$ is the unique such slope on $T_i$; 

\item any curve carried by $\gamma_i$ with slope $s_i$ traverses only ladderpole edges; and

\item $\gamma_i$ positively carries a representative of every integral class in a half space of $H_1(T_i)$ bounded by $\mathbb R\cdot [s_i^+]$.

\end{enumerate}
\end{lemma}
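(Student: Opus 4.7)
The plan is to leverage the alternating band structure of $\gamma_i$ from Lemma \ref{bands}: the torus $T_i$ is a cyclic union of $n$ upward and $n$ downward bands separated by ladderpole curves of slope $s_i$, with every rung oriented right-to-left. Locally at each switch of $\gamma_i$, the smooth arc consists of two consecutive ladderpole edges and the cuspidal branch is a single rung, so the switch condition reads as a linear equation with the two ladderpole weights differing by the incident rung weight. I will use the following preliminary observation, provable by lifting to the universal cover of $T_i$ (where ladderpoles become parallel lines and rungs become arcs all pointing in a single horizontal direction): for any positively carried curve $c$, the meridian-direction component of $[c] \in H_1(T_i)$ equals, up to a fixed sign convention, the total weight that $c$ places on rungs---in particular a non-negative integer.

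For (1), let $s_i^+$ be the multicurve of all upward ladderpoles with their upward orientation, and $s_i^-$ the multicurve of all downward ladderpoles. Both are positively carried by assigning weight $1$ to the ladderpole edges in question and $0$ to all rungs (the switch conditions reduce to $1=1+0$). All ladderpoles share slope $s_i$, and the numbers of upward and downward ladderpoles coincide, so $[s_i^+]$ and $[s_i^-]$ are opposite nonzero multiples of the primitive class representing $s_i$, giving $[s_i^+]+[s_i^-]=0$. For uniqueness, suppose $s' \neq s_i$ admitted positively carried $s'^{+}, s'^{-}$ with $[s'^+]+[s'^-]=0$. Then neither $[s'^{\pm}]$ lies on the line $\mathbb{R}[s_i]$, so each must place strictly positive weight on some rung, whence both $[s'^{\pm}]$ have strictly positive meridian-direction component by the preliminary observation; two such classes cannot sum to zero.

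For (2), a positively carried curve of slope $s_i$ has homology class in $\mathbb{R}[s_i]$, hence zero meridian-direction component; by the preliminary observation and non-negativity of weights, every rung weight vanishes and the curve traverses only ladderpole edges. For (3), I construct an explicit zigzag carried curve $Z$ which enters each band exactly once via a rung and closes up along short ladderpole segments, using the fact that at every switch the local cusp/smooth structure permits turning onto a rung precisely when the ladderpole is traversed in its oriented direction. In a basis $\{[s_i^+], m\}$ for $H_1(T_i;\mathbb{R})$ with $m$ a meridian, $[Z]$ has the form $(p,1)$ for some integer $p$. Then any integer class $(a,b)$ with $b \geq 0$ is realized by the non-negative weight combination $b \cdot Z + (a - bp) \cdot s_i^+$ when $a \geq bp$, and $b \cdot Z + (bp - a) \cdot s_i^-$ otherwise; linearity of the switch conditions shows the resulting weight function is positively carried, and its homology class is $(a,b)$.

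The main obstacle is establishing the existence of the zigzag $Z$ in part (3): while its existence is combinatorially plausible, verifying that the switch structure along each ladderpole admits the required sequence of turns, and that the path closes up after one meridional loop, requires a careful case analysis guided by Lemma \ref{veering on boundary} and the explicit pattern of Figure \ref{lifted train track}. Once $Z$ is in hand, the remaining arguments reduce to elementary linear algebra in $H_1(T_i) \cong \mathbb{Z}^2$.
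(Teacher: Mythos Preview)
Your approach is essentially the same as the paper's: use ladderpoles for $s_i^\pm$, show that traversing a rung forces a nonzero component transverse to the ladderpole direction, and for (3) produce one positively carried curve with such a component and fill out the half-space by combining with $s_i^\pm$. Your ``preliminary observation'' is the paper's intersection-number argument in different clothing: the paper observes that a positively carried curve traversing a rung must traverse a rung of \emph{every} band (else it could not close up, since all rungs point right-to-left), and hence has strictly positive algebraic intersection with a fixed ladderpole $s_i^+$. This immediately gives (2) and the uniqueness in (1).

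Two minor points. First, the paper takes $s_i^+$ and $s_i^-$ to be single ladderpoles rather than the full multicurves; this matches the lemma's wording (``curves'') and avoids the basis issue you would otherwise face in (3), since a multicurve class need not be primitive. Second, your assertion that every switch is trivalent with two ladderpole branches and one rung is not true in general---a switch corresponds to an ideal edge, which may be incident to many tetrahedra and hence many faces---but nothing in your argument actually depends on it.

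Where the paper is genuinely simpler is precisely your acknowledged ``main obstacle'' in (3). You ask for a zigzag $Z$ with meridian component exactly $1$, and this is what drives the case analysis you anticipate. The paper sidesteps this: it first records that if $\gamma_i$ positively carries a representative of $ka$ for some integer $k>1$, then oriented surgery on self-intersections produces $k$ parallel positively carried representatives of $a$. Hence it suffices to exhibit \emph{any} positively carried curve traversing a rung, with no constraint on its homology class. That construction is immediate---start at a switch, follow rungs until you land on the ladderpole you started from, then close up along it. Together with $s_i^\pm$ and closure of the carried cone under convex combination, this fills the half-space. Your obstacle dissolves once you drop the unnecessary normalization on $[Z]$.
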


\begin{proof}
The union of all edges in a positive ladderpole forms an oriented curve $s_i^+$ which is positively carried by $\gamma_i$, and similarly a downward ladderpole gives an oriented positively carried curve $s_i^-$. It is clear that $[s_i^+]$ and $[s_i^-]$ sum to 0 in homology. 

Let $\alpha$ be a curve positively carried by $\gamma_i$ which traverses a rung of $\gamma_i$, and recall that all rungs of $\gamma_i$ are oriented from right to left. As we trace along $\alpha$, we must traverse a rung of every band in $T_i$ from right to left, as otherwise $\alpha$ could not close up. Therefore $\alpha$ intersects $s_i^+$, and after a perturbation we can assume that the intersections are all positive, so $i([s_i^+],[\alpha])>0$. Hence any curve carried positively or negatively by $\gamma_i$ which traverses a rung has nonzero  intersection number with $s_i^+$ and cannot have slope $s_i$. This proves claim 2. 

If $\beta$ is a curve carried positively by $\gamma_i$ with slope $\ne s_i$, then it traverses rungs of $\gamma_i$ and has positive intersection with $s_i^+$ as above. This means $\gamma_i$ cannot positively carry a representative of $-[\beta]$, completing the proof of claim 1.

Convex combinations of classes with positively carried representatives can be represented by oriented cut and paste sums carried by $\gamma_i$. Also, if $\gamma_i$ positively carries a curve representing $ka$ for $a\in H_1(T_i)$, $k\in\mathbb Z_{>1}$, then $\gamma_i$ positively carries a curve representing $a$. Indeed, if $\rho$ is such a curve, we can perform oriented surgeries to eliminate self-intersections that do not change the homology class of $\rho$, yielding $k$ parallel curves carried by $\gamma_i$ which represent $a$. In other words, to show that a homology class is represented by a positively carried curve, it it enough to show some multiple of the homology class is represented by a positively carried curve.

Therefore to prove claim 3, it suffices by claims 1 and 2 to show that $\gamma_i$ positively carries a curve traversing a rung of $\gamma_i$. It is easy to see that there is such a curve: you can draw one by starting at a switch and tracing along rungs of $\gamma_i$ until you reach the ladderpole containing your path's initial point. Then the path can be closed up along the ladderpole.
\end{proof}

Lemma \ref{traintrackfacts} concerned all curves carried by $\gamma$. Now we consider the boundaries of surfaces carried by $B_{\mr \tau}$.
If an oriented surface is carried by $B_{\mr\tau}$, its boundary traces out an oriented collection of curves on $\partial \mathring M$ that is positively carried by $\gamma$.
Our understanding of $\gamma$ allows us to deduce information about the surfaces carried by $B_{\mr\tau}$, and thus surfaces representing classes in $\sigma$.

For example, we observe that every surface carried by $B_{\mr \tau}$ has a non-ladderpole boundary component.

\begin{lemma}\label{not all exceptional}
There is no surface carried by $B_{\mr\tau}$ whose boundary components are all ladderpoles. 
\end{lemma}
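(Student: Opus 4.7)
The plan is to argue by contradiction. Suppose $S$ is a nonempty surface carried by $B_{\mr\tau}$ with every boundary component a ladderpole, and let $w_F \geq 0$ be the weight that $S$ assigns to each 2-cell $F$ of $\mr\tau$. First observe that each edge $e$ of $\gamma$ is the truncation at an ideal vertex of exactly one 2-cell $F(e)$, and the weight that $\partial S$ places on $e$ equals $w_{F(e)}$. By Lemma \ref{traintrackfacts}(2), a ladderpole-slope curve positively carried by $\gamma_i$ traverses only ladderpole edges, so $w_F = 0$ whenever $F$ has at least one rung among its three truncation edges.

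The entire proof then reduces to the structural claim that every 2-cell of $\mr\tau$ has at least one rung among its three truncation edges. Granted this, $w_F = 0$ for all $F$, so $S$ is empty and the contradiction is complete. To prove the claim, fix a 2-cell $F$ and one of the two taut tetrahedra $T$ containing it. Then $T$ labels the three edges of $F$ as $\pi, 0, 0$, with the $\pi$-edge opposite the other $\pi$-edge of $T$. At the vertex $v_0$ of $F$ opposite the $\pi$-edge, both edges of $F$ incident to $v_0$ are 0-edges, so the truncation of $F$ at $v_0$ is a ``0-0'' arc of the flat triangle at $v_0$; at each of the other two vertices one of $F$'s incident edges is the $\pi$-edge of $T$, so the truncation is a ``0-$\pi$'' arc.

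The final step is to identify ``0-0'' arcs with ladderpole edges and ``0-$\pi$'' arcs with rungs. Lemmas \ref{bands} and \ref{veering on boundary}, together with the fact that each band is one edge across, pin down the local picture of an upward (resp. downward) band: the $\pi$-vertex of each upward flat triangle lies on one boundary of its band and its two 0-vertices lie on the other, as illustrated by Figure \ref{lifted train track}. It follows that the 0-0 edge of the flat triangle lies entirely on a band boundary (a ladderpole) while each 0-$\pi$ edge traverses the band (is a rung), confirming the claim. The main obstacle is exactly this last identification: ruling out a degenerate configuration in which a 0-vertex sits on the same band boundary as the $\pi$-vertex, which would allow a 0-$\pi$ edge to run along a ladderpole instead of traversing a band. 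This requires invoking the veering condition at the $\pi$-vertex, which forces the standard ladder picture and rules out the degenerate case.
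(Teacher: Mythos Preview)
Your overall strategy matches the paper's: both arguments show that any $2$-cell $F$ of $\mr\tau$ has a rung among its three truncation edges, and hence any carried surface must have a rung in its boundary. You also correctly observe that exactly one of $F$'s three truncation arcs is a $0$-$0$ edge of its flat triangle while the other two are $0$-$\pi$ edges.

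However, the identification you make in the final step is backwards. You assert that the $\pi$-vertex of a flat triangle is the lone vertex on one boundary of its band while both $0$-vertices lie on the opposite boundary, so that the $0$-$0$ edge is a ladderpole and both $0$-$\pi$ edges are rungs. In fact the opposite holds: for each flat triangle, the $\pi$-vertex lies on the \emph{same} ladderpole as one of the $0$-vertices, and the remaining $0$-vertex is the lone vertex on the opposite ladderpole. Consequently every ladderpole edge is $0$-$\pi$ with respect to both of its adjacent flat triangles, while a rung is $0$-$0$ on one side and $0$-$\pi$ on the other. One can see this from the veering condition of Lemma~\ref{veering on boundary}: at a switch $p$ on a ladderpole, the triangles with a cusp at $p$ include triangles from \emph{both} adjacent bands, so the lone vertex of each flat triangle is always a $0$-vertex, not the $\pi$-vertex. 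Figure~\ref{lifted train track}, which you cite in support of your claim, actually illustrates this correct picture.

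With the corrected identification your structural claim follows immediately---the unique $0$-$0$ truncation edge of $F$ is the guaranteed rung---and the argument is then exactly the paper's proof. As written, though, the justification of the key step is incorrect.
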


\begin{proof}
Suppose $S$ is a surface carried by $B_{\mr\tau}$. An edge of $\gamma$ traversed by $\partial S$ corresponds to a truncated ideal triangle of ${\mr\tau}$ on which $S$ has positive weight. Let $t$ be such a triangle. Choose a truncated ideal tetrahedron $Y$ of which $t$ is a face. The truncated vertices of $t$ correspond to 3 edges of the truncated vertices of $Y$, which are flat triangles lying in $\partial \mr M$. Exactly one of these edges, which we will call $e$, is a 0-0 edge with respect to a truncated vertex of $Y$.

Each edge in $\gamma$ is incident to two flat triangles. Our knowledge of $\gamma$ gives us that a ladderpole edge is 0-$\pi$ with respect to both of its incident triangles, while a rung is 0-0 with respect to one incident triangle and 0-$\pi$ with respect to the other. It follows that $e$ is a rung, so $\partial S$ cannot consist of all ladderpole edges. 
\end{proof}

Recall from Section \ref{notation} the injective puncturing map $P\colon H_2(M)\rightarrow H_2(\mathring M, \partial \mr M)$, induced at the level of chains by sending $S$ to $S\setminus U$. As a reminder, we will write $P(\alpha)=:\mathring\alpha$ when convenient. 

We can classify the possible boundaries of all surfaces carried by $B_{\mr\tau}$ coming from $M$, i.e. those representing classes that lie in the image of $P$.

\begin{lemma} \label{types of boundaries}
Let $\alpha\in \cone(\sigma)$ be an integral class, and consider a surface $Y\subset \mr M$ carried by $B_{\mr\tau}$ and representing $\mr\alpha$ . Then for each component $T_i$ of $\partial \mr M$, $\partial Y\cap T_i$ is positively carried by $\gamma_i$, and is either:
\begin{enumerate}
\item empty,
\item a collection of ladderpoles which is nulhomologous in $T_i$, or
\item a collection of meridians.
\end{enumerate}
Further, if $\alpha\in \intr \cone(\sigma)$, then $\partial Y\cap T_i$ is a nonempty collection of meridians.
\end{lemma}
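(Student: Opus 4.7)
The first step is to identify the class $[\partial Y \cap T_i] \in H_1(T_i)$. Represent $\mr\alpha$ by $Y = S \setminus U$ for some closed oriented surface $S \subset M$ with $[S] = \alpha$, so that $\partial Y \cap T_i = S \cap T_i$. Because $S \cap U_i$ is a compact surface properly embedded in the solid torus $U_i$, the class $[S \cap T_i]$ lies in the kernel of $H_1(T_i) \to H_1(U_i)$, which is generated by the meridian $m_i$ of $U_i$. An algebraic count of how $S$ winds through $U_i$ identifies the multiple as $\alpha \cdot [c_i]$, giving
\[
[\partial Y \cap T_i] = (\alpha \cdot [c_i])\, m_i \quad \text{in } H_1(T_i).
\]
The claim that $\gamma_i$ positively carries $\partial Y \cap T_i$ is immediate, since $\gamma_i$ was defined with precisely the orientation induced by boundaries of surfaces carried by $B_{\mr\tau}$.

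Next I would split cases on the sign of $\alpha \cdot [c_i]$. If $\alpha \cdot [c_i] = 0$, then $\partial Y \cap T_i$ is nulhomologous in $T_i$. From the proof of Lemma \ref{traintrackfacts}(1), every positively carried curve $\delta$ satisfies $i([s_i^+],[\delta]) \geq 0$, with equality exactly when $\delta$ has slope $s_i$; combined with Lemma \ref{traintrackfacts}(2) this means equality occurs iff $\delta$ is a collection of ladderpoles. Summing over components of $\partial Y \cap T_i$ and using nulhomology forces every component to be a ladderpole and the total to cancel to zero, putting us in case 1 or case 2. If instead $\alpha \cdot [c_i] \neq 0$, write $[\partial Y \cap T_i] = n\, m_i$ with $n \neq 0$. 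Disjoint essential simple closed curves in $T_i$ are pairwise parallel, so all essential components share a single slope; compatibility with the class $n\, m_i$ forces that slope to be the meridian slope. Uniformity of orientation among these meridians follows from Lemma \ref{traintrackfacts}(3): since $m_i$ and $-m_i$ lie on opposite sides of $\mathbb{R}\cdot[s_i^+]$, only one sign admits positively carried representatives. This places us in case 3.

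For the final assertion, if $\alpha \in \intr \cone(\sigma)$ is integral, then by Fried's theorem recalled in Section \ref{Thurston norm section} some positive multiple of $\alpha$ is represented by a cross-section $F$ to the flow $\phi$, and a cross-section meets every orbit of $\phi$---including each singular orbit $c_i$---transversely and with the same sign. Hence $\alpha \cdot [c_i] > 0$ for each $i$, landing us in the second case above with $n > 0$ and yielding a nonempty collection of meridians. I expect the main technical nuisance to be eliminating stray nulhomotopic boundary components of $\partial Y \cap T_i$: a priori such trivial loops could coexist with the desired ladderpoles or meridians, but they can be ruled out by a short argument using the orientations of edges of $\gamma_i$ around the complementary flat triangles, or by invoking incompressibility of $Y$ (which holds because $B_{\mr\tau}$ is taut). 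The case analysis itself is then a clean consequence of the homology identification in the first paragraph together with Lemma \ref{traintrackfacts}.
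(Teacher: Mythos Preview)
Your proposal is correct and follows essentially the same route as the paper: identify $[\partial Y\cap T_i]$ as an integer multiple of the meridian class (the paper packages this as the commutative square involving $Q\colon H_2(M)\to H_2(\closure(U),\partial\mr M)$, which is exactly your $S\setminus U$ computation), then invoke Lemma~\ref{traintrackfacts} to sort the positively carried boundary curves into ladderpoles or meridians, and finally appeal to Fried for the interior case. One small point to clean up: $Y$ is fixed by hypothesis as a surface carried by $B_{\mr\tau}$, so rather than writing ``$Y=S\setminus U$'' you should say that $[\partial Y\cap T_i]$ depends only on $[Y]=\mr\alpha$ via the connecting homomorphism $\partial$, and then compute that class using the alternative representative $S\setminus U$---otherwise you lose the right to use that $\partial Y$ is positively carried by $\gamma_i$ later in the argument.
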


\begin{proof}
Because the co\"orientation of $Y$ agrees with that of $B_{\mr\tau}$, the orientation of $\partial Y\cap T_i$ agrees with that of $\gamma_i$. Hence $\partial Y\cap T_i$ is positively carried.

Let $Q$ be the map $H_2(M)\to H_2(\closure (U),\partial \mr M)$ defined similarly to the puncturing map $P$ by excising $M\setminus \closure (U)$. Then we have the commutative diagram
\begin{equation}\label{commutative diagram}
\begin{tikzcd}
H_2(M)\arrow[hookrightarrow]{r}{P}\arrow{d}{Q}&H_2(\mr M,\partial \mr M)\arrow{d}{\partial}\\
H_2(\closure(U),\partial \mr M)\arrow{r}{-\partial} &H_1(\partial \mr M)
\end{tikzcd}
\end{equation}
(A quick remark: the minus sign on the bottom boundary map of (\ref{commutative diagram}) reflects the fact that $\partial \colon H_2(\closure(U),\partial \mr M)\rightarrow H_1(\partial\mr M)$ is induced by the boundary map on 2-chains \emph{inside} $U$, while the vertical boundary map is induced by the map on 2-chains \emph{outside} $U$.)

Fix a component $U_i$ of $U$ with boundary $T_i$, meridional disk $D_i$ and set $m_i=-\partial D_i$.  Since the images of $\partial\circ P$ and $-\partial \circ Q$ are equal, and $H_2(\closure(U_i),T_i)=\langle [D_i]\rangle$, the boundary of $Y$ on $T_i$ is homologous to $km_i$, where $k\in \mathbb Z$. Moreover, $\partial Y$ is embedded and carried by $\gamma$. In particular this means the curves of $\partial Y\cap T_i$ are parallel.

If $\partial Y\cap T_i$ is nonempty and nulhomologous in $T_i$, then it must consist of an even number of ladderpoles, by Lemma \ref{traintrackfacts}.

Otherwise $k\ne 0$, in which case $\partial Y\cap T_i$ must be a collection of $k$ meridians. If $\alpha\in \intr\cone(\sigma)$, then by Fried's results in \cite{Fr}, $\alpha$ has positive intersection with each singular orbit of $\phi$. Thus $k> 0$ in the above analysis, completing the proof.
\end{proof}

A \textbf{vertex class} of $\sigma$ is a primitive integral homology class $v\in H_2(M)$ projecting to a vertex of $\sigma$.

Let $T_i$ be a torus boundary component of $\mathring{M}$. An embedded surface $Y\subset \mathring{M}$ carried by $B_{\mr\tau}$ is \textbf{ladderpole at $T_i$} if $Y\cap T_i$ is a collection of ladderpoles. The homology class $\alpha\in H_2(M)$ is \textbf{ladderpole at $T_i$} if $\mr\alpha$ has an embedded representative carried by $B_{\mr\tau}$ which is ladderpole at $T_i$.

Note that by Lemma \ref{types of boundaries}, ladderpole classes must lie in $\partial \cone(\sigma)$.

\subsection{Our approach to Question \ref{Oertel's question}}

In \cite{Mo}, Mosher gives an example of how to construct a branched surface spanning any fibered face $F$ of a 3-manifold $N$. He simply takes embedded minimal-$\chi_-$ representatives $S_i$ of each vertex class $v_i$ for the face and perturbs them to intersect transversely. Then there is an isotopy in a neighborhood of the intersection locus, shown in Figure \ref{branchedsum2}, which gives the union of the surfaces the structure of a branched surface $B_F$ carrying each vertex class. We will refer to this operation on transverse co\"oriented surfaces as \textbf{branched sum}. Since the vertex classes span $F$, the $B_F$ spans $F$. The problem with this method, as Mosher notes, is that there is no guarantee that all surfaces carried by the $B_F$ are incompressible.

\begin{figure}
\centering
\includegraphics[height=1.9in]{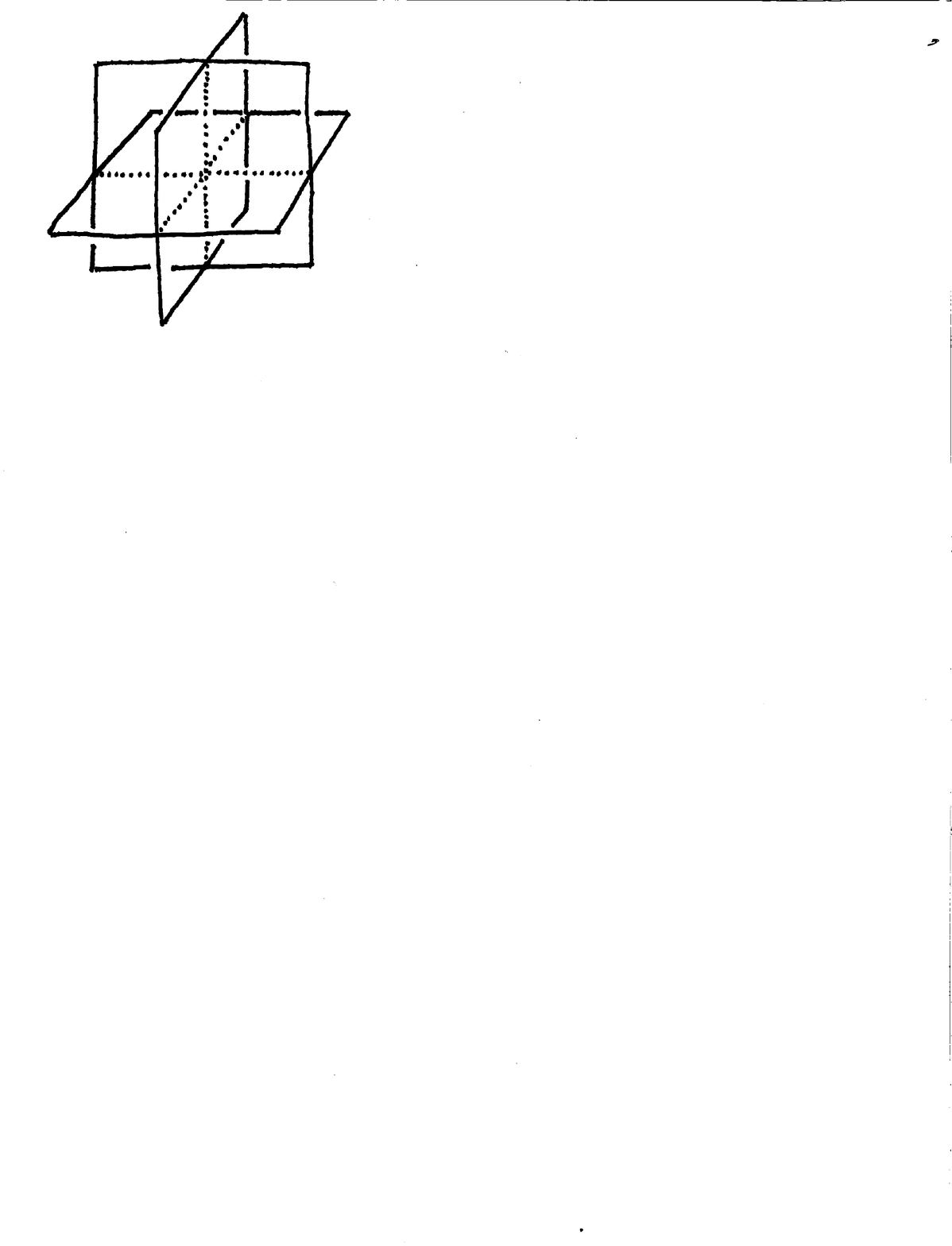}
\quad\quad
\includegraphics[height=1.9in]{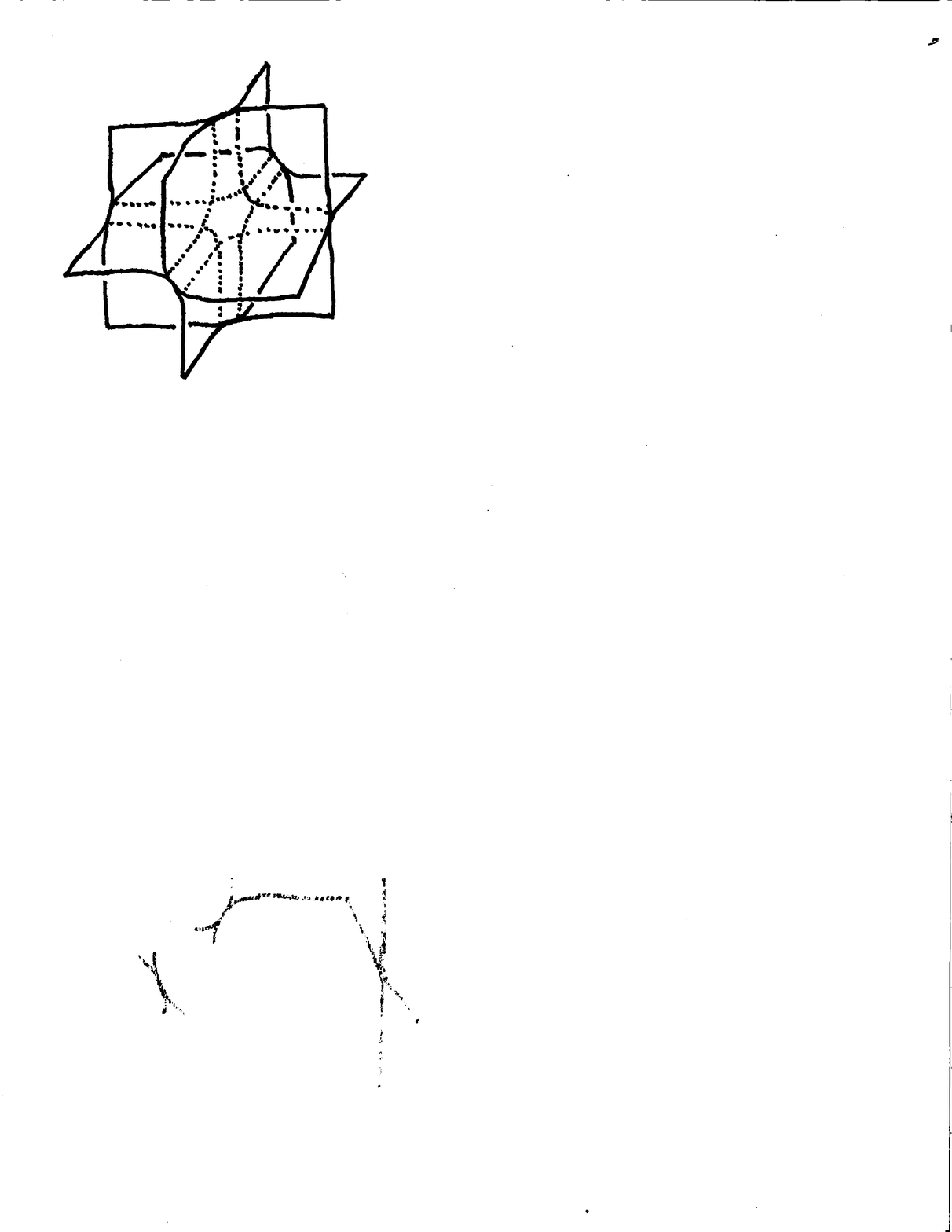}
\caption{The branched sum operation. A generic intersection, shown on the left, can be smoothed in a unique way, shown on the right, such that it preserves co\"orientations. Here the co\"orientations are such that they all point into the octant facing the reader.}
\label{branchedsum2}
\end{figure}

However, $B_F$ is frequently almost taut. Indeed, consider the surface $\sum_iS_i$, i.e. the oriented cut-and-paste sum of the $S_i$. We have 
\[
\chi_-(\sum_i S_i) =\sum_i \chi_-(S_i)=\sum_i x(v_i)=x(\sum_i v_i),
\]
where the last equality follows from the fact that $x$ is linear on $\cone (F)$. Therefore $\sum_i S_i$ realizes the minimal $\chi_-$ in the homology class $\sum_i v_i$. Moreover, $\sum_i S_i$ is fully carried by $B_F$, so almost tautness follows from Lemma \ref{tautness from carrying} as long as $B_F$ carries no disks or spheres. Slightly more generally, we have proved the following lemma.

\begin{lemma}\label{branched sum almost taut}
Let $F$ be a closed face of the Thurston norm ball in $H_2(N,\partial N)$. If $\{S_i\}$ is a finite collection of norm-minimizing surfaces embedded in $M$ such that $\{[S_i]\}\subset \cone (F)$, then the branched sum of the $S_i$ is almost taut provided it carries no disks or spheres.
\end{lemma}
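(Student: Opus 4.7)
The plan is to apply Lemma \ref{tautness from carrying} to $B_F$, the branched sum of the $S_i$. By hypothesis $B_F$ carries no disks or spheres, so it suffices to produce a single norm-minimizing class represented by a surface fully carried by $B_F$. The natural candidate is the oriented cut-and-paste sum $\Sigma := \sum_i S_i$, whose class is $\sum_i [S_i] \in \cone(F)$.

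First I would verify that $\Sigma$ is fully carried by $B_F$. Every sector of $B_F$ sits inside exactly one of the original $S_i$ (the branched sum operation cuts the $S_i$ apart along their intersection locus and then resolves the intersection, so each sector is a connected piece of some $S_i$). Consequently the weight $\Sigma$ assigns to a sector is the total number of sheets of the $S_i$ containing it, which is at least $1$. So every sector gets a positive weight, as required for full carrying.

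Next I would verify that $\Sigma$ is norm-minimizing. Since $B_F$ carries no disks or spheres and both the $S_i$ and $\Sigma$ are carried by $B_F$, none of them has any disk or sphere components; therefore $\chi_-(S_i) = -\chi(S_i)$ and $\chi_-(\Sigma) = -\chi(\Sigma)$. Because $\chi$ is additive under oriented cut-and-paste, $\chi_-(\Sigma) = \sum_i \chi_-(S_i)$. Combining this with the norm-minimality of each $S_i$ and the linearity of $x$ on $\cone(F)$ gives
\[
\chi_-(\Sigma) = \sum_i \chi_-(S_i) = \sum_i x([S_i]) = x\Bigl(\sum_i [S_i]\Bigr) = x([\Sigma]),
\]
so $\Sigma$ realizes the Thurston norm of its class. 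Applying Lemma \ref{tautness from carrying} with $\Sigma$ then shows that $B_F$ is almost taut.

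There is no real obstacle here; the only point requiring slight care is the assertion that $\chi_-$ is additive under oriented cut-and-paste for these surfaces, which rests entirely on the hypothesis that $B_F$ carries no disks or spheres. That hypothesis is also exactly what is needed to invoke Lemma \ref{tautness from carrying} at the end.
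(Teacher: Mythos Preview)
Your proposal is correct and follows essentially the same approach as the paper: form the oriented cut-and-paste sum $\Sigma=\sum_i S_i$, observe it is fully carried by $B_F$, use linearity of $x$ on $\cone(F)$ together with norm-minimality of the $S_i$ to get $\chi_-(\Sigma)=x([\Sigma])$, and then invoke Lemma~\ref{tautness from carrying}. You are slightly more explicit than the paper about why $\Sigma$ is fully carried and why $\chi_-$ is additive here, but the argument is the same.
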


If $B_F$ is indeed almost taut and carries no disks or spheres, it follows that the only way $B_F$ can fail to be an answer to Oertel's question \ref{Oertel's question} is if it carries a compressible torus. 

The method we use to address Oertel's Question \ref{Oertel's question} is similar to Mosher's example above. The difference is that rather than take any embedded representatives of vertex classes, we take representatives of their punctured images in $H_2(\mathring{M},\partial \mathring{M})$ lying in a regular neighborhood of $\tau$. This is possible because by Theorem \ref{Agol's theorem}, $B_{\mr\tau}$ carries a representative of $\mr\alpha$ for every integral class in $\intr \cone(\sigma)$. By Lemma \ref{closed cone}, the same can be said for representatives of classes $\mr\beta$ for $\beta\in \partial \cone(\sigma)$. 
With an extra hypothesis, we show how to extend the surfaces over the Dehn filling so that their branched sum $B_\sigma$ carries no tori. Rather than show directly that no tori are carried, our method of proof is to demonstrate that $B_\sigma$ is a homology branched surface, which is enough to imply tautness by Lemmas \ref{tautness from carrying} and \ref{almost taut to taut}.

\subsection{A lemma concerning $x$ and Dehn filling}

We are particularly interested in the restriction of the puncturing map $P\colon H_2(M)\to H_2(\mr M,\partial \mr M)$ to $\cone(\sigma)$, and it will be useful to have the following lemma concerning $P$'s relationship to the norm $x$. The subject of how the Thurston norm behaves under Dehn filling has been studied in \cite{Gab87a}, \cite{Gab87b}, and \cite{Sel}, and more recently in \cite {BakTay}.

Recall that $c$ denotes the union of the singular orbits of $\phi$, which is the flow associated to $\sigma$.

\begin{lemma}\label{norm and filling}
Let $\alpha\in \cone(\sigma)$ be an integral homology class. Then 
\[
x(\alpha)=x(\mathring \alpha)-i(\alpha,c), 
\]
where $i(\,\, \,, \,\, )$ is algebraic intersection number.
\end{lemma}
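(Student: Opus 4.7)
The plan is to show that the function $f(\alpha):=x(\alpha)-x(\mr\alpha)+i(\alpha,c)$ is linear on $\cone(\sigma)$ and vanishes on the dense subset of primitive integral classes in $\intr\cone(\sigma)$; the identity $f\equiv 0$ on $\cone(\sigma)$ will then give the lemma. For the linearity of $f$, the norm $x$ is linear on $\cone(\sigma)$ by Thurston, $i(\cdot,c)$ is linear on $H_2(M)$, and $x\circ P|_{\cone(\sigma)}$ is linear because Theorem \ref{Agol's theorem} together with Lemma \ref{closed cone} places $P(\cone(\sigma))$ inside the cone of classes carried by the taut branched surface $B_{\mr\tau}$, on which $x$ is linear by the oriented cut-and-paste argument at the beginning of the subsection ``Branched surfaces and faces of the Thurston norm ball''.

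To check that $f(\alpha)=0$ for a primitive integral class $\alpha\in\intr\cone(\sigma)$, I will establish the two inequalities separately. For the lower bound $x(\alpha)\ge x(\mr\alpha)-i(\alpha,c)$, I will represent $\alpha$ by a fiber $S$ of the associated fibration; by Fried's theorem $S$ is a cross-section to $\phi$, so shrinking $U$ if necessary makes $S\cap U$ a disjoint union of $i(\alpha,c)$ meridional disks. Setting $\mr S:=S\cap\mr M$, we obtain $\chi(S)=\chi(\mr S)+i(\alpha,c)$; since fibers of a closed hyperbolic mapping torus have genus at least two, neither $S$ nor $\mr S$ has sphere or disk components, hence $\chi_-=-\chi$ for both. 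Combining $\chi_-(S)=x(\alpha)$ with the trivial bound $\chi_-(\mr S)\ge x(\mr\alpha)$ yields the desired inequality. For the upper bound $x(\alpha)\le x(\mr\alpha)-i(\alpha,c)$, I will instead start from a representative $\mr S^*$ of $\mr\alpha$ carried by $B_{\mr\tau}$, which exists by Theorem \ref{Agol's theorem} and Lemma \ref{closed cone} and is norm-minimizing by tautness. Lemma \ref{types of boundaries} identifies $\partial\mr S^*\cap T_i$ as a collection of $i(\alpha,c_i)$ parallel meridians; capping each of these with a disk in $U$ produces a closed surface $S^{**}$ representing $\alpha$ with $\chi(S^{**})=\chi(\mr S^*)+i(\alpha,c)$, whence $x(\alpha)\le\chi_-(S^{**})=x(\mr\alpha)-i(\alpha,c)$.

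The step I expect to require the most care is verifying that the counts and orientations of meridians in the capping argument really give exactly $i(\alpha,c_i)$ boundary components on each $T_i$, and that no sphere or disk components of the resulting $S^{**}$ inflate $\chi_-$. The former follows from the commutative diagram in the proof of Lemma \ref{types of boundaries} together with Fried's positivity of intersection of interior classes with singular orbits; for the latter, $B_{\mr\tau}$ carries no disks, and any stray sphere components can be discarded using the irreducibility of $M$ without increasing $\chi_-$.
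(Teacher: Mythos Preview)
Your approach is genuinely different from the paper's and is in several ways more elegant. The paper proves both inequalities directly for every $\alpha\in\cone(\sigma)$: the direction $x(\alpha)\le x(\mr\alpha)-i(\alpha,c)$ is obtained, as in your upper bound, by capping a carried representative of $\mr\alpha$; but for the reverse inequality the paper invokes Mosher's results on surfaces almost transverse to $\phi$ (the lemmas you did not use), and then argues carefully that such a surface can be isotoped off the $\alpha$-null singular orbits so that all remaining intersections with $c$ are positive. Your linearity reduction sidesteps this machinery entirely: once you know $x$, $i(\cdot,c)$, and $x\circ P$ are all linear on $\cone(\sigma)$, it suffices to check the identity on primitive classes in the interior, where the fiber is literally a cross-section to $\phi$ and puncturing is transparent. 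The price you pay is that your argument leans on the tautness of $B_{\mr\tau}$ (for the linearity of $x\circ P$), whereas the paper's lower bound is independent of the veering triangulation.

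One point does need repair. Your proposed handling of ``stray sphere components'' in $S^{**}$ does not do what you want: discarding spheres leaves $\chi_-(S^{**})$ unchanged, so if spheres were present you would only get $x(\alpha)\le -\chi(S^{**})+2(\#\text{spheres})$, which is too weak. The correct observation is that no component of $S^{**}$ can be a sphere or torus: each component $C$ of $\mr S^*$ is carried by the homology branched surface $B_{\mr\tau}$, hence $[C]\ne 0$ in $H_2(\mr M,\partial\mr M)$; since $P$ is injective and $P$ of the capped component equals $[C]$, the capped component is homologically nontrivial in $M$, ruling out spheres (irreducibility) and tori ($x$ is a genuine norm). Alternatively, and more simply, you can drop the upper-bound step altogether: the punctured fiber $\mr S$ is itself a fiber of the induced fibration of $\mr M$ over $S^1$, hence norm-minimizing, so $\chi_-(\mr S)=x(\mr\alpha)$ and your ``lower bound'' computation already gives equality.
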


Before proving Lemma \ref{norm and filling} we state some results from \cite{Mo} that require some definitions.

We say a flow $\phi'$ is a \textbf{dynamic blowup} of $\phi$ if is obtained by the following procedure: we replace a singular orbit $\theta$ by the suspension of a homeomorphism $f$ of a finite tree $T$. This homeomorphism's first return map on each edge of $T$ should fix the endpoints and act without fixed points on the interior. Thus each edge of $T$ can be given an orientation according to the direction points are moved by its first return map, and around each vertex these orientations should alternate between outward and inward. The suspended tree forms a complex of annuli $K$ which is invariant under $\phi'$. The new flow $\phi'$ is semiconjugate to $\phi$ by a map which collapses $K$ to $\theta$, and is one to one in the complement of $K$. The vector fields generating $\phi$ and $\phi'$ differ only inside a small neighborhood of $\theta$.

We say that a surface $S$ embedded in $M$ is \textbf{almost transverse} to $\phi$ if there is a dynamic blowup $\phi^\#$ of $\phi$ such that $S$ is transverse to $\phi^\#$, and the sum of tangent spaces $TS\oplus T\phi^\#$ is positively oriented in $M$. Here $TS$ and $T\phi^\#$ denote the tangent spaces to the surface $S$ and flow $\phi^\#$, respectively.

\begin{lemma}[\cite{Mo}]\label{almosttransverse1}
Let $\alpha\in H_2(M)$ be an integral class. Then $\alpha$ can be represented by a surface almost transverse to $\phi$ if and only if $\alpha\in\cone \sigma$ (recall that $\phi$ depends on $\sigma$). More specifically, there exists a way to dynamically blow up $\phi$ to a flow $\phi^\#$ along only singular orbits $\theta$ of $\phi$ with $i(\alpha, \theta)=0$ such that $\alpha$ is represented by a surface $S$ transverse to $\phi^\#$.
\end{lemma}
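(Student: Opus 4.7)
The plan is to prove each direction separately, leaning on two foundational results of Fried: first, that $\intr\cone(\sigma)$ is precisely the set of primitive integral classes in $H_2(M)$ represented by cross-sections to $\phi$; and second, the dual characterization that $\cone(\sigma) = \{\alpha \in H_2(M) : \alpha \cdot [\gamma] \geq 0 \text{ for every closed orbit } \gamma \text{ of } \phi\}$, the dual cone being generated by the homology directions of $\phi$.

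For the ($\Rightarrow$) direction, suppose $S$ represents $\alpha$ and is transverse to a dynamic blowup $\phi^\#$ of $\phi$ with $TS \oplus T\phi^\#$ positively oriented. I would verify Fried's dual criterion. For a closed orbit $\gamma$ of $\phi$ that is not among the blown-up orbits $\theta_1,\ldots,\theta_k$, $\gamma$ is still a closed orbit of $\phi^\#$, so each point of $S \cap \gamma$ contributes $+1$ to the intersection number by the positivity of $TS \oplus T\phi^\#$, giving $\alpha \cdot [\gamma] \geq 0$. For $\gamma = \theta_i$, pick any periodic orbit $\gamma'$ of $\phi^\#$ inside the invariant annulus complex $K_{\theta_i}$; since $K_{\theta_i}$ collapses onto $\theta_i$ under the semiconjugacy, $[\gamma'] = [\theta_i]$ in $H_1(M)$, so $\alpha \cdot [\theta_i] = \alpha \cdot [\gamma'] \geq 0$ by the previous case. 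Hence $\alpha \in \cone(\sigma)$.

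For the ($\Leftarrow$) direction, when $\alpha$ lies in $\intr\cone(\sigma)$ Fried already provides a cross-section (transverse to $\phi$, so almost transverse with $\phi^\# = \phi$). For $\alpha \in \cone(\sigma) \setminus \intr\cone(\sigma)$, I would take a sequence of primitive integral $\alpha_n \in \intr\cone(\sigma)$ converging projectively to $\alpha$, represent each by a cross-section $S_n$ transverse to $\phi$, and extract a limit surface $S$ representing a positive integer multiple of $\alpha$ by passing to a normalized limit of the transverse invariant measures supported on the $S_n$ and realizing the limit by an embedded surface. Away from the singular orbits, $S$ remains transverse to $\phi$; tangencies can develop only at singular orbits $\theta$ with $i(\alpha,\theta) = 0$, since at any other singular orbit the numbers $i(\alpha_n,\theta) = i(\alpha,\theta) > 0$ are eventually constant and force local transversality to persist in the limit. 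To resolve each tangency at such a $\theta$, I would analyze how the sheets of $S$ wrap onto $\theta$ as a cyclic arrangement of ``petals'', encode that combinatorial data as a finite tree $T$ together with an edge-reversing self-homeomorphism $f$, and build $\phi^\#$ by replacing $\theta$ with the suspension of $(T,f)$. A $C^\infty$-small perturbation of $S$ will then be transverse to $\phi^\#$ with $TS \oplus T\phi^\#$ positively oriented (inherited in the limit from the $S_n$). Because no blowup is performed at orbits with $i(\alpha,\theta)>0$, this establishes the ``more specifically'' clause as well.

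The main obstacle is the limiting construction in the ($\Leftarrow$) direction. Sequences of embedded cross-sections need not converge in any $C^0$ sense, so the natural setting is to take limits of normalized transverse invariant measures to $\phi$ on the $S_n$ (or equivalently, of the de Rham currents they represent) and then realize the limit current by a genuine embedded surface, which is the geometric heart of Fried's framework. Once the limit is in hand, the combinatorial difficulty is matching the petal structure of $S$ at each tangential $\theta$ with the correct tree-homeomorphism pair $(T,f)$, and verifying that a single semiconjugacy collapsing the suspended tree back to $\theta$ can be arranged to respect the orientation condition $TS \oplus T\phi^\#>0$ uniformly at every simultaneously resolved orbit.
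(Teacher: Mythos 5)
The paper does not actually prove this lemma: it is quoted from Mosher~\cite{Mo} as a black box (the relevant material is Mosher's Theorems~1.4--1.5 together with the surrounding structure on dynamic blowups) and used in the present paper only inside the proof of Lemma~\ref{norm and filling}. So there is no proof in the paper to compare against; what follows assesses your reconstruction on its own merits and against Mosher's known approach.

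Your $(\Rightarrow)$ direction is fine and is essentially the standard argument. Granting Fried's characterization of $\cone(\sigma)$ as the dual cone to the homology directions of $\phi$, a surface $S$ transverse to a blowup $\phi^{\#}$ with $TS\oplus T\phi^{\#}>0$ has nonnegative algebraic intersection with every closed orbit of $\phi^{\#}$, and pushing this through the collapsing semiconjugacy handles the blown-up orbits as you describe.

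The $(\Leftarrow)$ direction has genuine gaps, and I do not think the limiting scheme you sketch can be made rigorous without essentially reproducing Mosher's machinery from scratch. Three concrete problems. First, a normalized limit of the cross-sections $S_n$ converges as a transverse invariant measure or de~Rham current, but the limiting object is in general a measured lamination, not an embedded surface; the step ``realize the limit by a genuine embedded surface representing (a multiple of) $\alpha$ that inherits almost-transversality'' is doing all the work and is not available as a black box. Second, the claim that ``tangencies can develop only at singular orbits $\theta$ with $i(\alpha,\theta)=0$, since at any other singular orbit the numbers $i(\alpha_n,\theta)=i(\alpha,\theta)>0$ are eventually constant'' is incorrect: for any sensible choice of $\alpha_n\in\intr\cone(\sigma)$ converging projectively to $\alpha$, the raw intersection numbers $i(\alpha_n,\theta)$ grow without bound at $\alpha$-positive orbits (only the normalized densities stabilize), and at $\alpha$-null orbits the unnormalized number of intersections stays bounded away from zero even though the density goes to $0$. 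So the dichotomy you want is not a soft consequence of the limit; identifying which orbits can obstruct is itself a dynamical input (the homology directions of a pseudo-Anosov flow that can vanish against a boundary class of $\cone(\sigma)$ are exactly those of the singular orbits). Third, even with a tree/petal picture in hand, one must check that a \emph{single} dynamic blowup $\phi^{\#}$ works simultaneously for every wrapping sheet at every $\alpha$-null orbit and still satisfies $TS\oplus T\phi^{\#}>0$; this compatibility is nontrivial and is precisely what Mosher's careful setup (Markov partitions, dynamic pairs, and a branched surface transverse to $\phi^{\#}$ whose weight cone realizes all of $\cone(\sigma)$) is designed to deliver. Mosher's route is constructive rather than a compactness argument: he builds the transverse branched surface first and reads off representatives of every class in $\cone(\sigma)$ from it, blowing up only at the orbits forced by $\alpha$ being in the boundary. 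Your sketch names the right ingredients but does not yet constitute a proof.
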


Singular orbits $\theta$ with $i(\alpha,\theta)=0$ are called \textbf{$\alpha$-null}, and if $i(\alpha,\theta)>0$ we will say that $\theta$ is \textbf{$\alpha$-positive}. It will also be useful to know:

\begin{lemma}[\cite{Mo}]\label{almosttransverse2}
Let $S$ be a surface almost transverse to a pseudo-Anosov flow on $M$. Then $S$ is norm-minimizing.
\end{lemma}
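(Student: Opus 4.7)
The plan is to reduce the almost-transverse case to the genuinely transverse case and then recognize $S$ as a fiber of a fibration, so that Thurston's classical theorem on fibered classes applies. Let $\phi^\#$ be the dynamic blowup of $\phi$ supplied by the definition of almost-transversality, so that $S$ is transverse to $\phi^\#$ with $TS \oplus T\phi^\#$ positively oriented.

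The first step is to show that $S$ is in fact a cross-section to $\phi^\#$, i.e.\ every orbit of $\phi^\#$ meets $S$ in forward and backward time. Positivity of $TS \oplus T\phi^\#$ means there is a nowhere-zero closed 1-form (up to smoothing) pairing positively with the generator of $\phi^\#$ and having $S$ as a transverse level set, so the question reduces to whether $S$ separates successive returns of each orbit. Here I would use the fact that a dynamic blowup of a pseudo-Anosov flow retains enough of the mixing and recurrence structure of the original flow: outside the invariant complex of annuli $K$, the map collapsing $\phi^\#$ to $\phi$ is a homeomorphism, so nontrivial recurrence of $\phi$-orbits forces $\phi^\#$-orbits to return. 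On $K$, the annular flow lines are closed orbits or travel between the boundary circles, all of which are crossed transversely by $S$. Together these observations force every $\phi^\#$-orbit to meet $S$; the classical argument of Fried, applied to $\phi^\#$ in place of $\phi$, then promotes transversality of the closed surface $S$ to the statement that $S$ is a genuine cross-section.

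Once $S$ is a cross-section to $\phi^\#$, the flow $\phi^\#$ identifies $M$ with the mapping torus of the first return map $f \colon S \to S$, so $S$ is a fiber of a fibration $M \to S^1$ and $[S]$ lies in the interior of the cone over a fibered face of the Thurston norm ball. Applying Thurston's theorem \cite{Th1} that fibers of fibrations realize the Thurston norm then gives $\chi_-(S) = x([S])$, which is exactly the norm-minimality claim.

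The main obstacle in this plan is Step 1: showing that the closed transverse surface $S$ is actually a cross-section to $\phi^\#$, not merely transverse to it. For a generic flow, transverse closed surfaces may fail to be cross-sections (orbits can spiral without hitting $S$), so one must seriously use the pseudo-Anosov structure of $\phi$ and the fact that $\phi^\#$ differs from $\phi$ only in controlled neighborhoods of singular orbits. Once this step is in hand, the rest of the argument is essentially a direct appeal to Thurston's theorem.
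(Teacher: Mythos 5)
The lemma is stated in the paper as a citation to Mosher (\cite{Mo}); no proof is given here, so the comparison is against what the cited argument must actually accomplish. Your plan has a fatal gap at Step~1.

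You want to show that an almost-transverse closed surface $S$ is in fact a cross-section to $\phi^\#$, hence a fiber, and then invoke Thurston. But this cannot work, and the reason is visible already in Lemma~\ref{almosttransverse1} of this paper: a class $\alpha$ is represented by an almost-transverse surface \emph{if and only if} $\alpha \in \cone(\sigma)$, the \emph{closed} cone. The whole point of replacing ``transverse'' by ``almost transverse'' is to reach classes on $\partial\cone(\sigma)$, and those classes are not fibered, so their representatives cannot be cross-sections. Concretely, your sentence ``on $K$, the annular flow lines are closed orbits or travel between the boundary circles, all of which are crossed transversely by $S$'' is false. The dynamic blowup in Lemma~\ref{almosttransverse1} is performed precisely at the $\alpha$-null singular orbits $\theta$, i.e.\ those with $i(\alpha,\theta)=0$. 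The annulus complex $K_\theta$ contains closed orbits of $\phi^\#$ (suspensions of the fixed vertices of the tree map) whose algebraic intersection with $S$ is $0$. Because $S$ is transverse to $\phi^\#$ with $TS\oplus T\phi^\#$ positively oriented, every intersection of an orbit with $S$ has the same sign, so zero algebraic intersection forces the closed orbit to be \emph{disjoint} from $S$. Thus $S$ misses some orbits of $\phi^\#$ and is not a cross-section, and the appeal to Fried's cross-section theorem and Thurston's fibered-face theorem collapses.

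The actual route (in \cite{Mo}) does not try to upgrade $S$ to a fiber; it argues directly that $\chi_-(S)=x([S])$ for almost-transverse $S$, essentially by relating $-\chi(S)$ to the pairing of $[S]$ with an Euler class of the plane field transverse to the flow and using Fried's description of the norm and its fibered cone. That argument degrades gracefully on $\partial\cone(\sigma)$, whereas the fiber argument cannot. If you want to salvage a fiber-type argument, you would at best prove the lemma for $\alpha\in\intr\cone(\sigma)$ (where no blowup is needed and $S$ really is a cross-section), but the almost-transverse case, which is the content of the lemma, requires a different mechanism.
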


\begin{proof}[Proof of Lemma \ref{norm and filling}]
First we will show that $x(\alpha)\le x(\mathring\alpha)-i(\alpha,c)$.

Let $Y$ be a representative of $\mathring \alpha$ carried by $B_{\mr\tau}$, and let $T_i=\partial U_i$ be one of the torus boundary components of $\mr M$. By Lemma \ref{types of boundaries}, $\partial Y\cap T_i$ is either a collection of meridians or a collection of ladderpoles that represents 0 in $H_2(T_i)$. Such a nulhomologous collection can be realized as the boundary of a family of closed annuli whose interiors are embedded in $U_i$.

Observe that $x(\alpha)\le x(\mathring\alpha)-n(Y, \partial\mr M)$, where $n$ is the number of meridians of $\partial\mr M$ in $\partial Y$. Indeed, we can glue a disk to each meridian boundary component, and cap off all other boundary components with annuli. Because $B_{\mr\tau}$ is taut and thus $Y$ is norm-minimizing, we obtain a representative $S$ of $\alpha$ with $\chi_-(S)=x(\mr\alpha)-n(Y,\partial\mr M)$.

Now we claim that $n(Y,\partial\mr M)=i(\alpha,c)$.

First, note that by the proof of Lemma \ref{types of boundaries}, the images of the boundary maps $\partial |_{\text{image(P)}} :H_2(\mr M,\partial\mr M) \rightarrow H_1(\partial\mr M)$ and $\partial : H_2(\closure(U),\partial\mr M)\rightarrow H_1(\partial\mr M)$ are contained in the subgroup $\bigoplus_i \langle m_i \rangle$ of $H_1(\partial\mr M)$. Here $m_i$ is the homology class of a meridional curve on $T_i$, oriented so that it is positively carried by $\gamma_i$. Let $\pi_i: \langle m_i\rangle\to \Z$ be the map $km_i\mapsto k$. Because $Y$ is carried by $B_{\mr\tau}$, then $n(Y,\partial\mr M)=\sum_i \pi_i(\partial(\mr\alpha))$, and it thus makes sense to write $n(\mr\alpha, \partial\mr M)$. 

We can package this information into an updated version of the diagram (\ref{commutative diagram}):

\begin{center}
\begin{tikzcd}
H_2(M)\arrow{rr}{P}\arrow{d}{Q}&&H_2(\mr M,\partial\mr M)\arrow{d}{\partial}\\
H_2(\closure(U),\partial\mr M)\arrow{rr}{-\partial} \arrow[densely dotted]{rd}[swap]{i(-,c)}&&\bigoplus_i \langle m_i \rangle \arrow[densely dotted]{dl}{\sum\pi_i}\\
&\Z&
\end{tikzcd},
\end{center}
where the dotted arrows are defined on the integral lattice of their domains. 
Since $i(\alpha,c)=i(Q(\alpha),c)$, the claim that $n(\mr \alpha,\partial\mr M)=i(\alpha,c)$ reduces to the claim that the above diagram is commutative, which is true since the square and triangle commute. Therefore $x(\alpha)\le x(\mathring\alpha)-i(\alpha,c)$.

Next we will show  that $x(\mr\alpha)\le x(\alpha)+i(\alpha,c)$. The idea of this direction is to take a norm-minimizing surface transverse to $c$ representing $\alpha$ and delete its intersections with $U$. This will give a representative of $\mr\alpha$ with $\chi_-=x(\alpha)+i(\alpha,c)$ as long as the orientations of all intersections with $c$ agree. Hence it suffices to show that there exists such a norm-minimizing surface representing $\alpha$, and this is where we use Lemmas \ref{almosttransverse1} and \ref{almosttransverse2}.

Let $S$ be a representative of $\alpha$ which is almost transverse to $\phi$ with blown up flow $\phi^\#$, let $\theta$ be an $\alpha$-null orbit of $\phi$, let $K_\theta$ be the $\phi^\#$-invariant annulus complex blowing up $\theta$, and $U_\theta^\#$ a solid torus containing $K_\theta$. Since $S$ is transverse to $\phi^\#$ and $TS\oplus T\phi^\#$ is positively oriented, we have in particular that each intersection point of $S$ with any $\alpha$-positive singular orbit of $\phi^\#$  is positively oriented. This is close to the property we want, but we need it to hold for $\phi$ and not just $\phi^\#$. Thus we will show that $S$ intersects the solid torus $U_\theta^\#$ in a collection of disjoint annuli, and can be isotoped outside $U_\theta^\#$. Then $S$'s positivity with respect to the $\alpha$-positive singular orbits of $\phi$ will be preserved by the semiconjugacy collapsing $K_\theta$ to $\theta$. Arguing in this way for each $\alpha$-null orbit, we will obtain a norm-minimizing surface representing $\alpha$ that has only positively oriented intersection points with $c$.

Observe that $[S]$ maps to $0$ under the map $H_2(M)\to H_2(\closure(U_\theta^\#),\partial U_\theta^\#)$ because $\theta$ is $\alpha$-null. After applying the boundary map $H_2(\closure(U_\theta^\#),\partial U_\theta^\#)\to H_1(\partial U_\theta^\#)$ we see that $S\cap\partial U_\theta^\#$ represents $0$ in $H_1(\partial U_\theta^\#)$. It follows that $S\cap \partial U_\theta^\#$, after perturbation for transversality, is a possibly empty nulhomologous embedded collection of curves. If it is empty, there is nothing for us to prove. 

If $S\cap \partial U_\theta^\#$ contains a curve $a$ which is inessential in $\partial U_\theta^\#$, then $a$ bounds a disk in $\partial U_\theta^\#$. If $a$ is essential in $S$,  this contradicts the incompressibility of $S$. If $a$ is inessential in $S$, then we can perform an isotopy of $S$ which removes $a$ from $\partial U_\theta^\#$. Therefore we can assume $S\cap \partial U_\theta^\#$ is a collection of curves essential in $\partial U_\theta^\#$. Since the collection is embedded  and nulhomologous, there must be $2n$ curves of the same slope, half having one orientation and half the other.

If this slope is the meridional slope for $U_\theta^\#$, the norm-minimality of $S$ implies that $S\cap \closure( U_\theta^\#)$ is a collection of $2n$ disks. However, since the orientations of these disks must match up with those of $S\cap \partial U_\theta^\#$, $n$ of the disks must intersect flow lines of $\phi^\#$ negatively, contradicting the definition of almost transversality.

The last possibility is that these curves do not have meridional slope, i.e. do not bound disks in $U_\theta^\#$. The norm-minimality of $S$ then implies that $S\cap \closure(U_\theta^\#)$ is a collection of annuli. As we noted before, this means $S_\alpha$ can be isotoped outside of $U_\theta^\#$, completing the proof.
\end{proof}

\subsection{Main theorem}

\begin{theorem}
\label{THBS from cusp condition}
If every boundary torus of $\mathring{M}$ has at most two ladderpole vertex classes, then $M$ has a taut homology branched surface spanning $\sigma$.
\end{theorem}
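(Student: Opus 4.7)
The approach, outlined in Section~\ref{boundary train track stuff}, is to produce embedded norm-minimizing representatives of the vertex classes of $\sigma$ that live in a regular neighborhood of the veering triangulation, cap off their boundaries inside the solid tori comprising $U$, and form the branched sum of the resulting surfaces in $M$. The plan is then to verify that this branched sum is a homology branched surface; tautness will follow from Lemmas~\ref{tautness from carrying} and~\ref{almost taut to taut}.

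First I would enumerate the vertex classes $v_1,\ldots,v_n$ of $\sigma$. By Theorem~\ref{Agol's theorem} for classes in $\intr \cone(\sigma)$ and by Lemma~\ref{closed cone} for boundary vertex classes, each $\mr v_i$ has an embedded representative $\mr Y_i$ carried by $B_{\mr\tau}$. Lemma~\ref{types of boundaries} constrains $\partial \mr Y_i \cap T_j$ on each boundary torus $T_j$ of $\mr M$ to be empty, a collection of meridians, or a nulhomologous collection of ladderpoles. I would cap each boundary inside the appropriate component $U_j$ of $U$: meridional boundaries receive parallel meridional disks, while nulhomologous ladderpole collections receive annular caps joining paired up- and down-ladderpoles. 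By Lemma~\ref{norm and filling}, the resulting closed surface $S_i\subset M$ satisfies $\chi_-(S_i)=x(v_i)$, so $S_i$ is norm-minimizing.

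After perturbing the $S_i$ into general position, form their branched sum $B_\sigma$. Since the $v_i$ span $\sigma$ and each is carried by $B_\sigma$, this branched surface spans $\sigma$. By Lemma~\ref{branched sum almost taut}, $B_\sigma$ is almost taut provided it carries no disks or spheres; this should follow because $B_{\mr\tau}$ already carries no disks or spheres and the capping disks and annuli cannot combine with the $\mr Y_i$ to produce one without contradicting the norm-minimality of $\sum_i S_i$.

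The key remaining step is to verify that $B_\sigma$ is a homology branched surface. Away from $U$, closed transversals through every point of $B_\sigma$ exist because $B_{\mr\tau}$ is transverse to the pseudo-Anosov flow $\phi$ and generic orbits of $\phi$ are dense in $M$. Inside each solid torus $U_j$, the problem reduces to understanding how the capping disks and annuli stack up. This is exactly where the hypothesis enters: when at most two of the $v_i$ are ladderpole at $T_j$, the annular caps can be chosen compatibly with the co\"orientations inherited from the up- and down-ladderpoles so that, together with any meridional disks, they assemble into a sub-branched-surface of $B_\sigma$ inside $U_j$ through each of whose points there passes a closed transversal. With three or more ladderpole vertex classes on the same torus this compatibility can fail. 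Executing this local construction inside each $U_j$ is the main obstacle of the proof, and it is the entire point of the ladderpole hypothesis.
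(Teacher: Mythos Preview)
Your outline matches the paper's strategy almost exactly: represent the $\mr v_i$ by surfaces carried by $B_{\mr\tau}$, cap off with disks and annuli, invoke Lemma~\ref{norm and filling} to see that the resulting $S_i$ are norm-minimizing, take the branched sum, and reduce tautness to showing $B_\sigma$ is a homology branched surface. The argument away from $U$ is also the same.

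The gap is precisely where you say ``executing this local construction inside each $U_j$ is the main obstacle.'' You assert that the annular caps ``can be chosen compatibly with the co\"orientations'' so that closed transversals exist through every point of $B_\sigma\cap U_j$, but you do not supply any mechanism for making this choice, and the compatibility you describe is not automatic. The paper's proof fills this in with a specific combinatorial device: when two vertex classes $v_j,v_k$ are ladderpole at $T_i$, one takes the two annulus families $A_{i,j}$ and $A_{i,k}$ to intersect essentially and chooses them among all such pairs so as to \emph{minimize the number of solid subtori} into which they cut $U_i$. The point of this minimization is that it rules out \emph{sources} and \emph{sinks}: subtori whose co\"oriented boundary subannuli all point outward, respectively inward. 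If a source or sink existed, an oriented cut-and-paste surgery (alternating between the two embedded families, which is why the ``at most two'' hypothesis is exactly what is needed) would strictly reduce the number of subtori. With no sinks present, a positive transversal starting at any $p\in B_\sigma\cap U_i$ can always be pushed outward through successive subtori until it exits $U_i$; symmetrically, the absence of sources lets a negative transversal exit $U_i$. These two arcs are then joined outside $U$ using the flow $\phi$.

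Without this source/sink argument (or an equivalent), your proposal does not actually establish that $B_\sigma$ is a homology branched surface inside $U$, and hence the appeal to Lemmas~\ref{tautness from carrying} and~\ref{almost taut to taut} is not yet justified. As a secondary point, your sketch for why $B_\sigma$ carries no disks or spheres is circular as written; the paper handles this cleanly by first proving $B_\sigma$ is a homology branched surface, which immediately forces any carried sphere to be homologically nontrivial and hence impossible.
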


\begin{proof}
Let $v_1,\dots, v_n$ be the vertex classes of $\sigma$. Take embedded representatives $\mathring{S}_1,\dots, \mathring{S}_n$ of $\mathring{v}_1,\dots, \mathring{v_n}$ which lie in a regular neighborhood of $B_{\mr\tau}$, transverse to its vertical foliation. 
After performing the Dehn filling at each boundary torus, these surfaces with boundary are embedded in $M$. By capping off their boundaries as follows, we can extend them over the Dehn filling so that they represent $v_1,\dots,v_n$.

For each boundary torus $T_i$ which $\mr S_j$ meets in a collection of meridians, we glue an embedded family of disks $D_{i,j}\subset \closure (U_i)$ to $\mr S_j\cap T_i$. 

If $\mathring{S_j}$ is ladderpole at $T_i$ for some $i$, then $\mathring{S_j}\cap T_i$ is an even-sized  collection of co\"oriented ladderpoles which sums to 0 in the first homology of $T_i$. Thus we may glue in a disjoint collection of annuli embedded in $U_i$ whose co\"orientation matches that of $\mathring{S_j}\cap T_i$. Call this collection $A_{i,j}$. 

By assumption, at most one other vertex class is ladderpole at $T_i$; if $\mathring{S_k}$ is also ladderpole at $T_i$, we take another such collection of annuli, $A_{i,k}$. We isotope the two families rel $T_i$ so that they intersect essentially. These families of annuli, $A_{i,j}$ and $A_{i,k}$, have ladderpole boundaries by the construction, and in particular they cut the solid torus $U_i$ into smaller solid subtori. By possibly choosing a new $A_{i,j}$ and $A_{i,k}$, we assume that they cut $U_i$ into a minimal number of solid subtori among all possible choices.

The boundaries of these solid subtori are partitioned into subannuli which are subsets of either $A_{i,j}$, $A_{i,k}$, or $\partial U_i$; the subannuli coming from $A_{i,j}$ and $A_{i,k}$ are co\"oriented. 

We now describe configurations of annuli which we call \emph{sources} and \emph{sinks}, and we will then explain why the $A_{i,j}$ and $A_{i,k}$ do not form these configurations. A source is a solid subtorus of $U_i$ having a boundary composed of subannuli from $A_{i,j}$ and $A_{i,k}$ whose co\"orientations point out of the subtorus. Similarly, a sink has co\"oriented boundary subannuli all pointing inward. Note that because $A_{i,j}$ is embedded, no two subannuli from $A_{i,j}$ are adjacent on the boundary of a source or sink; the same holds for $A_{i,k}$. Given a source or sink, we can perform an oriented cut and paste surgery on $A_{i,j}$ and $A_{i,k}$ as shown in Figure \ref{source sink}. As explained in the caption of Figure \ref{source sink}, this surgery reduces the number of solid subtori of $U_i$. Because we chose $A_{i,j}$ and $A_{i,k}$ to minimize the number of subtori, we conclude there are no sources or sinks in $U_i$.

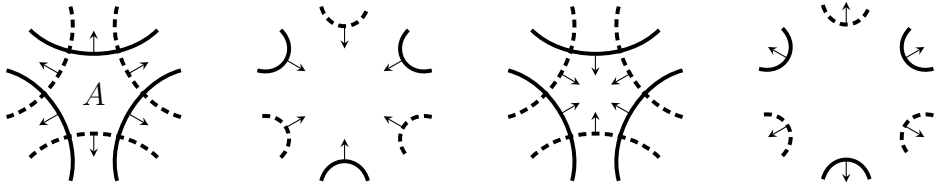
\begin{figure}
\centering
\begin{tikzpicture}[scale=1.2]
\draw [line width=1.5pt](45:1) .. controls (45:.5) and (135:.5) .. (135:1);
\draw [line width=1.5pt](165:1) .. controls (165:.5) and (255:.5) .. (255:1);
\draw [line width=1.5pt](285:1) .. controls (285:.5) and (15:.5) .. (15:1);
\draw[line width=1.5pt,densely dashed] (345:1) .. controls (345:.5) and (75:.5) .. (75:1);
\draw [line width=1.5pt,densely dashed] (105:1) .. controls (105:.5) and (195:.5) .. (195:1);
\draw [line width=1.5pt,densely dashed] (225:1) .. controls (225:.5) and (315:.5) .. (315:1);
\foreach \x in {30,90,...,330}
	\draw[->] (\x:.45)--(\x:.7);
\node at (0,0) {$A$};
\end{tikzpicture}
\quad\quad
\begin{tikzpicture}[scale=1.2]
\draw [line width=1.5pt](45:1) .. controls (45:.7) and (15:.7) .. (15:1);
\draw [line width=1.5pt](45:1)(165:1) .. controls (165:.7) and (135:.7) .. (135:1);
\draw [line width=1.5pt](45:1)(285:1) .. controls (285:.7) and (255:.7) .. (255:1);
\draw[line width=1.5pt, densely dashed] (345:1) .. controls (345:.7) and (315:.7) .. (315:1);
\draw [line width=1.5pt,densely dashed] (105:1) .. controls (105:.7) and (75:.7) .. (75:1);
\draw [line width=1.5pt, densely dashed] (225:1) .. controls (225:.7) and (195:.7) .. (195:1);
\foreach \x in {30,90,...,330}
	\draw[->] (\x:.75)--(\x:.5);
\end{tikzpicture}
\quad\quad
\begin{tikzpicture}[scale=1.2]
\draw [line width=1.5pt](45:1) .. controls (45:.5) and (135:.5) .. (135:1);
\draw [line width=1.5pt](165:1) .. controls (165:.5) and (255:.5) .. (255:1);
\draw [line width=1.5pt](285:1) .. controls (285:.5) and (15:.5) .. (15:1);
\draw[line width=1.5pt,densely dashed] (345:1) .. controls (345:.5) and (75:.5) .. (75:1);
\draw[line width=1.5pt,densely dashed] (105:1) .. controls (105:.5) and (195:.5) .. (195:1);
\draw [line width=1.5pt,densely dashed] (225:1) .. controls (225:.5) and (315:.5) .. (315:1);
\foreach \x in {30,90,...,330}
	\draw[->] (\x:.45)--(\x:.2);
\end{tikzpicture}
\quad\quad
\begin{tikzpicture}[scale=1.2]
\draw [line width=1.5pt](45:1) .. controls (45:.7) and (15:.7) .. (15:1);
\draw [line width=1.5pt](165:1) .. controls (165:.7) and (135:.7) .. (135:1);
\draw [line width=1.5pt](285:1) .. controls (285:.7) and (255:.7) .. (255:1);
\draw[line width=1.5pt,densely dashed] (345:1) .. controls (345:.7) and (315:.7) .. (315:1);
\draw [line width=1.5pt,densely dashed] (105:1) .. controls (105:.7) and (75:.7) .. (75:1);
\draw [line width=1.5pt,densely dashed] (225:1) .. controls (225:.7) and (195:.7) .. (195:1);
\foreach \x in {30,90,...,330}
	\draw[->] (\x:.75)--(\x:1);
\end{tikzpicture}

\caption{Cross-sectional views of (from left to right): a source, surgered source, sink, and surgered sink. The solid lines are portions of the intersections of a meridional disk of $U_i$ with an embedded family of annuli, and the dashed lines correspond to a separate family of annuli. By examining the co\"orientations in the leftmost picture, we see that none of the regions incident to $A$ correspond to sources, so that all of the incident regions correspond to subtori distinct from the source corresponding to $A$. Thus the surgery merges distinct subtori, reducing the total number of subtori; the argument is symmetric for sinks. In the general case, sources and sinks may have any even number of boundary subannuli.}
\label{source sink}
\end{figure}

Now define $S_\ell=\mr S_\ell\cup \left(\bigcup_i A_{i,\ell} \right) \cup \left( \bigcup_i D_{i,\ell} \right)$. This gives us a collection of closed surfaces $S_1,\dots, S_n$ embedded in $M$. We see from the definition of the puncturing map $P$ that $P([S_\ell])=\mr v_\ell$, and since $P$ is injective, it follows that $[S_\ell]=v_\ell$.

Each disk of $\bigcup_i D_{i,\ell}$ intersects $c$ positively because $\mr S_\ell$ is carried by $\mr \tau$. Therefore $\#(\bigcup_i D_{i,\ell})=i(v_\ell,c)$. Each annulus of $\bigcup_i A_{i,\ell}$ contributes 0 to $\chi_-(S_\ell)$. Therefore we have 
\begin{align}
\chi_-(S_\ell)&=\chi_-(\mr S_\ell)-i(v_\ell,c) \nonumber\\
&=x(\mr v_\ell) -i(v_\ell,c)\label{normminimizing1}\\
&=x(v_\ell) \label{normminimizing2},
\end{align}
where (\ref{normminimizing1}) follows from the fact that $\mr S_\ell$ is carried by $\mr\tau$ and is thus norm-minimizing, and (\ref{normminimizing2}) comes from Lemma \ref{norm and filling}.

Let $B_\sigma$ be the branched sum of $S_1,\dots,S_n$. As the branched sum of norm-minimizing surfaces, it will be almost taut by Lemma \ref{branched sum almost taut} provided it carries no spheres (we need not consider disks since $B_\sigma$ has no boundary).  It spans $\sigma$ because it carries a representative of each vertex. 

Let us briefly review the structure of $B_\sigma$. Inside $\mr M$, $B_\sigma$ is a branched surface lying in a regular neighborhood of $B_{\mr\tau}$. Its boundary is a train track lying in a regular neighborhood of $\gamma$. Inside each $U_i$, $B_\sigma$ is a branched sum of meridional disks with at most 2 embedded collections of annuli whose boundaries have ladderpole slope on $T_i$. We will use the notations $A_{i,\ell}$ and $D_{i,\ell}$ to denote the images of the $A_{i,\ell}$ and $D_{i,\ell}$ under the branched sum isotopy.

We now show that $B_\sigma$ is a homology branched surface, which will show that $B_\sigma$ carries no spheres, whence it is almost taut by Lemma \ref{tautness from carrying}. Since an almost taut homology branched surface in a pseudo-Anosov mapping torus is taut by Lemma \ref{almost taut to taut}, this will complete the proof. 

If $p$ is any point in $B_\sigma$ outside of $U$, there is a closed positive transversal through $p$ because $B_{\mr\tau}$ is a homology branched surface. 
Now suppose $p\in B_\sigma$ lies inside $U_i$ for some $i$. We construct a path $f$ from $p$ to the interior of $\mr M$ which is a positive transversal to $B_\sigma$.

Begin the path $f$ at $p$ by traveling from $B_\sigma$ into $U\setminus B_\sigma$ in the direction of the co\"orientation of $B_\sigma$. The endpoint of $f$ lies in a component $C$ of $\closure(U_i)\setminus (\bigcup_\ell D_{i,\ell})$ that is homeomorphic to a solid cylinder $\{x\in \mathbb R^2\mid ||x||\le 1\}\times (0,1)$. The annuli $\bigcup_\ell A_{i,\ell}$ cut $C$ into smaller subcylinders whose sides are either co\"oriented portions of $\bigcup_\ell A_{i,\ell}$ or subsets of $\partial U_i$.

Because there are no sinks in $U_i$, this subcylinder is either adjacent to $\partial U_i$ or possesses an outwardly co\"oriented wall. If the subcylinder is adjacent to $\partial U_i$, extend $f$ to a point $q$ outside of $U_i$. Otherwise, extend $f$ through an outwardly co\"oriented wall to enter a new subcylinder of $C$, and iterate this procedure. Each component of $\left(\bigcup_\ell A_{i,\ell}^1\right )\cap C$ is a co\"oriented rectangle separating $C$ into two components. Each time $f$ passes through one of these rectangles, $f$ is blocked from passing through a second time because of the rectangle's co\"orientation. It follows that $f$ never returns to the same subcylinder of $C$. Thus the procedure eventually terminates, and $f$ may be extended to $q\notin U_i$.

A symmetric construction using the fact that there are no sources in $U_i$ shows that there is a negative transversal $h^{-1}$ from $p$ to a point $r$ exterior to $U_i$. Using the fact that $B_\tau$ is transverse to the pseudo-Anosov suspension flow in $M\setminus c$, we can find a positive transversal $g$ from $q$ to $r$. The concatenation $fgh$ is then a closed positive transversal through $p$.  
\end{proof}

We remark that Theorem \ref{THBS from cusp condition} extends Theorem 1.5 in \cite{Mo}, which states that if every vertex of the fibered face $\sigma$ has positive intersection with each singular orbit of $\varphi$ (and in particular is non-ladderpole), then $M$ has a taut oriented branched surface.

\begin{theorem}\label{small betti}
If $b_2(M)\le 3$, then each boundary torus of $\mathring{M}$ witnesses at most two ladderpole vertex classes.
\end{theorem}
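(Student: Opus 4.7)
The plan is to identify, for each torus $T_i$, a single linear functional on $H_2(M;\mathbb R)$ whose zero set cuts out a proper face of $\cone(\sigma)$ containing all ladderpole vertex classes at $T_i$, and then conclude via a dimension count on faces of $\sigma$. With $U_i$ the component of $U$ whose boundary is $T_i$ and $c_i$ the singular orbit it surrounds, I would define
\[
\Phi_i\colon H_2(M;\mathbb R)\longrightarrow\mathbb R,\qquad \Phi_i(\alpha)=i(\alpha,c_i).
\]
By the commutative diagram (\ref{commutative diagram}) and its refinement in the proof of Lemma \ref{norm and filling}, $\Phi_i(\alpha)$ is the coefficient of $m_i$ in $\partial\mr\alpha\in\bigoplus_j\langle m_j\rangle$, so $\Phi_i$ is a well-defined linear functional.

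Next I would verify two facts about $\Phi_i$ on $\cone(\sigma)$ using Lemma \ref{types of boundaries}: first, $\Phi_i\ge 0$ on $\cone(\sigma)$, since a representative of $\mr\alpha$ carried by $B_{\mr\tau}$ meets $T_i$ in either nothing, a nulhomologous collection of ladderpoles, or a collection of positively oriented meridians; and second, $\Phi_i>0$ on $\intr\cone(\sigma)$, since interior classes are forced into the last (meridional, nonempty) case. Now if $v$ is a ladderpole vertex class at $T_i$, then by definition $\mr v$ has a carried representative whose boundary on $T_i$ is a nulhomologous collection of ladderpoles, so $\Phi_i(v)=0$. Hence every ladderpole vertex class at $T_i$ lies in
\[
F_i:=\cone(\sigma)\cap \ker\Phi_i.
\]

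The two facts above say that $\ker\Phi_i$ is a supporting hyperplane of $\cone(\sigma)$ which misses $\intr\cone(\sigma)$, so $F_i$ is a proper face of the cone $\cone(\sigma)$. Equivalently, $F_i$ is the cone over a proper face $\bar F_i$ of the polytope $\sigma$, and the extreme rays of $F_i$ (i.e. vertices of $\bar F_i$) are precisely the vertex classes of $\sigma$ contained in $\bar F_i$. Since $\sigma$ is a fibered face, it is top-dimensional, so $\dim\sigma=b_2(M)-1\le 2$ by hypothesis. Therefore $\bar F_i$ has dimension at most $1$: it is empty, a single vertex, or a $1$-dimensional edge, and in any case it contains at most two vertices. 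This gives the desired bound.

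There is no real obstacle here; the argument is a direct packaging of Lemma \ref{types of boundaries} and the diagram (\ref{commutative diagram}). The only substantive insight is to recognize that the conditions ``boundary on $T_i$ is nulhomologous'' and ``intersection with $c_i$ is zero'' coincide for classes carried by $B_{\mr\tau}$, so that a single linear equation $\Phi_i=0$ suffices to cut out a face of $\cone(\sigma)$ containing all ladderpole vertex classes at $T_i$.
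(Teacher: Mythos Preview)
Your proof is correct and follows the same overall arc as the paper's: show that all ladderpole vertex classes at $T_i$ lie in a single proper face of $\sigma$, then conclude by the dimension bound $\dim\sigma\le b_2(M)-1\le 2$. The difference lies in how that common face is exhibited. The paper observes directly at the surface level that the oriented sum of two surfaces ladderpole at $T_i$ is again ladderpole at $T_i$, so by Lemma~\ref{types of boundaries} the sum of any two such classes still lies in $\partial\cone(\sigma)$; a standard convexity argument (a supporting hyperplane through $a+b$ must contain both $a$ and $b$) then forces all ladderpole classes at $T_i$ into a common facet. You instead name the supporting functional explicitly as $\Phi_i=i(\,\cdot\,,c_i)$ and verify from Lemma~\ref{types of boundaries} and diagram~(\ref{commutative diagram}) that it is nonnegative on $\cone(\sigma)$, strictly positive on the interior, and vanishes on ladderpole classes. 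Your version is a touch more explicit---it identifies \emph{which} face is in play---while the paper's is more hands-on; the two are essentially dual, since closure of ladderpole classes under addition is exactly what one expects from the zero locus of a supporting linear functional.
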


\begin{proof}
The oriented sum of two surfaces which are ladderpole at a component $T_i$ of $T$ is again ladderpole at $T_i$. Therefore
the same is true for homology classes, and in particular the sum of two ladderpole classes lies in the boundary of $\sigma$ by Lemma \ref{types of boundaries}. We conclude that all vertex classes which are ladderpole at the same boundary component of $\mathring{M}$ lie in the same facet of $\sigma$. The dimension of $\sigma$ is at most 2 by assumption, so this facet has dimension at most 1. Since a 1-cell has two boundary points, at most two vertex classes can be ladderpole at the same $T_i$.
\end{proof}

\begin{corollary}\label{main corollary}
If $b_2(M)\le 3$, then $M$ has a taut homology branched surface spanning $\sigma$.
\end{corollary}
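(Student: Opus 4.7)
The plan is to apply the two preceding results in sequence. By hypothesis $b_2(M)\le 3$, which is precisely the assumption under which Theorem \ref{small betti} guarantees that every boundary torus of $\mr M$ witnesses at most two ladderpole vertex classes of $H_2(M)$. Once this conclusion is in hand, the hypothesis of the Main Theorem \ref{THBS from cusp condition} is satisfied, and invoking that theorem immediately produces a taut homology branched surface in $M$ spanning $\sigma$.

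Since the corollary is an immediate composition of two already-established implications, there is essentially no obstacle to overcome: all of the substantive work has been carried out in the proofs of Theorems \ref{THBS from cusp condition} and \ref{small betti}. The only care needed is to note that the condition stated in Theorem \ref{small betti} about boundary tori of $\mr M$ is exactly (not merely implied by) the ladderpole-at-most-two hypothesis appearing in Theorem \ref{THBS from cusp condition}, so no rephrasing is required before chaining them.

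Thus the proof amounts to a single sentence of the form: by Theorem \ref{small betti}, the hypothesis of Theorem \ref{THBS from cusp condition} holds, and so the desired taut homology branched surface spanning $\sigma$ exists. No additional lemmas, case analyses, or intermediate constructions are needed.
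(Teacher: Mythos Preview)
Your proposal is correct and matches the paper's approach exactly: the corollary is stated immediately after Theorem \ref{small betti} with no separate proof, since it follows at once by combining Theorem \ref{small betti} with Theorem \ref{THBS from cusp condition}.
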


The following corollary was known to Mosher in \cite{Mo}, but we include it here because it follows very easily from our results.

\begin{corollary}\label{oneorbitcor}
If $\phi$ has only one singular orbit, then $M$ has a taut homology branched surface spanning $\sigma$.
\end{corollary}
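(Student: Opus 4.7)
The plan is to verify the hypothesis of Theorem \ref{THBS from cusp condition} and then apply it directly. Since $\phi$ has a single singular orbit, the drilled manifold $\mathring M$ has exactly one torus boundary component $T_1$, so it suffices to show that $T_1$ witnesses at most two ladderpole vertex classes of $H_2(M)$.

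I would in fact show the stronger statement that there are no ladderpole vertex classes at $T_1$ whatsoever. Suppose toward a contradiction that some vertex class $v$ of $\sigma$ were ladderpole at $T_1$. By definition, $\mr v$ would then have an embedded representative $Y$ carried by $B_{\mr\tau}$ such that $Y \cap T_1$ consists of ladderpole curves. Because $T_1$ is the only component of $\partial \mr M$, we have $\partial Y = Y \cap T_1$, and so every boundary component of $Y$ is a ladderpole. But Lemma \ref{not all exceptional} rules out precisely this situation: no surface carried by $B_{\mr\tau}$ has all of its boundary components ladderpole. This contradiction establishes the claim.

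With the hypothesis of Theorem \ref{THBS from cusp condition} satisfied vacuously (zero is at most two), that theorem immediately produces a taut homology branched surface spanning $\sigma$, completing the proof. No serious obstacle arises: the key observation is simply that the single-cusp hypothesis forces any potentially-ladderpole carried representative to have all of its boundary on $T_1$, which is exactly what Lemma \ref{not all exceptional} forbids.
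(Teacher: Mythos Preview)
Your proof is correct and follows essentially the same approach as the paper: both observe that with a single boundary torus, a ladderpole representative would have \emph{all} boundary components ladderpole, contradicting Lemma \ref{not all exceptional}, and then invoke Theorem \ref{THBS from cusp condition}. The only cosmetic difference is that the paper states the slightly stronger fact that \emph{no} class in $H_2(M)$ is ladderpole, whereas you restrict attention to vertex classes; either suffices.
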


\begin{proof}
In this case there are no ladderpole classes in $H_2(M)$. Indeed, if $\alpha\in H_2(M)$, then a representative of $\mr\alpha$ carried by $B_{\mr \tau}$ cannot have all ladderpole boundary components by Lemma \ref{not all exceptional}. Since $\mr M$ has only one boundary component, such a representative of $\mr\alpha$ cannot have \emph{any} ladderpole boundary components. The result follows from Theorem \ref{THBS from cusp condition}.
\end{proof}

\textsc{Yale University}

\textit{Email address}: \texttt{michael.landry@yale.edu}

\end{document}